\newtheorem{theorem}{Theorem}[section]
\newtheorem{proposition}{Proposition}[section]
\newtheorem{lemma}{Lemma}[section]
\theoremstyle{definition}
\theoremstyle{remark}
\newtheorem{remark}{Remark}[section]
\begin{document}
\title[Asymptotic behavior of solutions to a singular chemotaxis system]
{Asymptotic behavior of solutions to a singular chemotaxis system in multi-dimensions}

\author[Q. Tao]{Qiang Tao}
\address{School of Mathematical Sciences, Shenzhen University, Shenzhen 518060, China}
 \email{taoq060@126.com}
 
\author[D. Wang]{Dehua Wang}
\address{Department of Mathematics,   University of Pittsburgh,  Pittsburgh, PA 15260, USA}
 \email{dhwang@pitt.edu}
 
\author[Y. Yang]{Ying Yang}
\address{School of Mathematical Sciences, Shenzhen University, Shenzhen 518060, China}
 \email{yiyiying729@163.com}

\author[M. Zhong]{Meifang Zhong$^*$}
\address{School of Mathematical Sciences, Shenzhen University, Shenzhen 518060, China}
 \email{13560907864@163.com}
\thanks{$^*$Corresponding author}


\begin{abstract}
 In this paper, we investigate the optimal large-time behavior of the global solution to a singular chemotaxis system in the whole space $\mathbb{R}^d$ with $d=2,3$.
Assuming that the initial data is sufficiently close to an equilibrium state, we first prove the $k$-th order spatial derivative of the global solution converges to its corresponding equilibrium at the optimal rate  $(1+t)^{-(\frac{d}{4}+\frac{k}{2})}$, which improve upon the result in \cite{WXY2016}. 
Then, for well-chosen initial data, we also establish lower bounds on the convergence rates, which
match those of the heat equation. Our proof relies on a Cole-Hopf type transformation, delicate spectral analysis, the Fourier splitting technique, and energy methods.
 
 \end{abstract}

\keywords{Singular chemotaxis system; Parabolic-hyperbolic system; Asymptotic behavior; Optimal decay rate}

\subjclass[2020]{35B40, 35Q35, 76N10.}

\date{\today}

\maketitle

\section{Introduction}

Chemotaxis describes the oriented movement of cells toward the chemical concentration gradient, which plays a significant role in various biological processes. To model this common biological phenomenon, Keller and Segel \cite{KS1970, KS1971} first developed the prototypical chemotaxis model.
 The extensively studied chemotaxis model is the following classical Keller-Segel system:
\begin{align*}
    \left\{
        \begin{array}{l}
        u_t=  \nabla\cdot (D\nabla u - \chi(c) u \nabla  c)  ,
        \\
        c_t =\varepsilon \Delta c - \mu \kappa(c) u ,
        \end{array}
    \right.
\end{align*}
 where $u(x,t)$ and $c(x,t)$ denote the density of cellular population and the concentration of chemical signal at position $x$  and time $t$, respectively.
The functions $\chi(c)$ and $\kappa(c)$ denote the chemotactic sensitivity and the consumption rate of the substrate by the cells, respectively.
The parameter $D>0$ is the diffusion rate of cells, and $\varepsilon\geq 0$ is the diffusion rate of chemical substances.
The solvability and stability of this system with various $\chi(c)$ and $\kappa(c)$ can be found in \cite{BBTW2015, HP2009, H2003, W2012} and the references therein.

In this paper, we consider the Keller-Segel system with $\chi(c)=\frac{1}{c}$ and $\kappa(c)=c$,   modeling the onset of tumor angiogenesis, in the following form, 
\begin{align}\label{1.1}
    \left\{
        \begin{array}{l}
        u_t= {\rm div} (D\nabla u - u \nabla \ln c),
        \\
        c_t =\varepsilon \Delta c - \mu uc,
        \end{array}
    \right.
\end{align}
in $\mathbb{R}^d \times (0, \infty)$ with $d=2,3$. This model was proposed in \cite{HA2000} to describe the main interaction between vascular endothelial growth factor (VEGF) and vascular endothelial cells.
In this particular case, $u(x,t)$ represents the density of vascular endothelial cells, and $c(x,t)$ is the concentration of VEGF.

 An intriguing aspect of model \eqref{1.1} is the presence of a singular logarithmic sensitivity function $\ln c$, which is singular at $c = 0$. This unique logarithmic sensitivity was initially introduced by Keller and Segel in their influential work \cite{EF1971}, where it was employed to describe the formation of traveling wave bands resulting from bacterial chemotaxis, as observed in Adler's experiments \cite{JA1966}. Mathematical derivation was subsequently presented in \cite{HA1997}, while \cite{YV2009} provided a biological rationale based on experimental measurements and model simulations. These results imply that logarithmic sensitivity has both mathematical and biological significance, despite the challenges of mathematical analysis and numerical computation. To overcome singularity, as done in \cite{LS1997, WH2008}, an effective approach is to apply the Hopf-Cole transformation as
$$
{\bf v}=- \nabla \ln c=-\frac{\nabla c}{c},
$$
which implies that the model \eqref{1.1} can be transformed into the following system,
\begin{equation}\label{1.2}
 	\left\{ \begin{aligned}
 		&u_t - \Delta  u  = \nabla\cdot  (u{\bf v})  , 	                       \\
 	    &{\bf v}_t -\varepsilon \Delta {\bf v} =\nabla(-\varepsilon |{\bf v}|^2 +u) ,   \\
 		&(u,{\bf v})(x,0)=(u_0,{\bf v}_0)(x) ,
     \end{aligned}
 	\right.  	
\end{equation}
where we take $D=\mu=1$ for simplicity.

To put our results into perspective, let us highlight some important progress on the system \eqref{1.2}.
When $\varepsilon = 0$, the system \eqref{1.2} is considered as a system of parabolic-hyperbolic balance laws and the mathematical analysis of this system is satisfactory.
Global well-posedness and long-time behavior with large initial data in one-dimensional space and with small initial data in the multi-dimensional case were proved in \cite{DL2014, FFH2002, FZ2012, LLZ2011, LPZ2012}, respectively. For the results of decay rates,
in \cite{DLRHPKZ2015}, the authors discuss the Cauchy problem of the system \eqref{1.2} in one-dimensional space.
Under the small energy of the low frequency part of the initial data,   the algebraic decay rate was obtained as,
$$
\sum_{k=1}^3\big(t^k\|\partial_x^{k-1}(u(t)-\bar{u})\|^2_{L^2}+\|\partial_x^{k-1}v(t)\|^2_{L^2}
+\int^t_{\tilde{T}}\tau^k\|\partial_x^ku(\tau)\|^2_{L^2}d\tau\big)
+\sum_{l=1}^2\int_{\tilde{T}}^t\tau^l\|\partial_x^lv(\tau)\|^2_{L^2}d\tau\le \zeta_0,
$$
where $\tilde{T}>0$ is a finite time and $\zeta_0>0$ is a constant. For $d$-dimensional cases ($d\geq4$), Guan, Wang and Xu \cite{XYGSLWYLFYX2016} established the global existence and uniqueness of the strong solutions. Then, based on the a priori estimates, the authors obtained the convergence rates of the strong solutions in the critical Besov spaces as
$$
\|(v, u-\bar{u})\|_{B_{2,1}^{\frac{d}{2}-2}}\le C_0(1+t)^{-\frac{d}{4}}.
$$
Using the embedding theorem $B_{2,1}^{\frac{d}{2}-2}\hookrightarrow L^2$ for $d\geq4$, one concludes that
the convergence rates of the strong solutions in $L^2$ are optimal.
Then, in \cite{FYXXLLCLW2019}, the authors extended the decay rates of the solutions in $L^2$-critical regularity framework for $d\geq2$ as,
\begin{align*}
(1+t)^{-\frac{d}{2}-\frac{s}{2}}&\le(1+t)^{-\frac{d}{2}+\frac{1}{2}},\quad\text{for low frequencies},
\\
(1+t)^{-\frac{d}{2}-\frac{1}{2}+\varepsilon}&\le(1+t)^{-\frac{d}{2}+\frac{1}{2}},\quad\text{for high frequencies},
\end{align*}
for $s\geq-1$. Recently, some studies have focused on decay rates for the chemotaxis model with logistic growth in the one-dimensional space, as in \cite{YNZKZ2020, YNZKZ2022}. Further work has been done on traveling waves, shock waves, and boundary layers in many papers, such as \cite{CKKV2020, HLWW2018, JLW2013, PW2018, PWZZ2018, WH2008}.

Mathematical theory on the solvability and stability of \eqref{1.2} with $\varepsilon > 0$ has attracted a lot of attention
in the last decade. From the structure of the transformed system \eqref{1.2}, the parameter $\varepsilon$ is the coefficient of diffusion and convection, which plays a dual role.
Different types of equations exhibit quite different properties, such as Lyapunov functions and the stability of traveling waves. Therefore, the case $\varepsilon > 0$ could not be considered a simple extension of the case $\varepsilon = 0$.
The research on the one-dimensional version with $\varepsilon > 0$ is relatively well-developed. 
The global well-posedness and the exponential stability of classical solutions on a bounded domain with large initial data and the Dirichlet or Neumann-Dirichlet boundary conditions were proved in
\cite{LPZ2012, LZ2015, TWW2013, WZ2013}. Then, in \cite{MWZ2018}, the authors studied the qualitative behavior of classical solutions to the Cauchy problem. They obtained the global asymptotic stability of constant ground states and identified the explicit decay rate of solutions.
There are more works on the formation of shock waves, the nonlinear stability of traveling waves, and boundary layer analysis for $\varepsilon>0$, see, e.g., \cite{HWZ2016, LLW2014, LW2011, LW2012}.
Compared to the one-dimensional case, due to the energy criticality and super-criticality, the mathematical theory for the multi-dimensional
case with $\varepsilon > 0$ is far from satisfactory. Winkler \cite{W2016} addressed global weak solutions and large-time behavior in the two-dimensional space for arbitrarily large initial data. In a radially symmetric setting, a global renormalized solution has been constructed in \cite{W2018}
without any restriction on the spatial dimensions or the size of the initial data.
Meanwhile, there are a few works on the Cauchy problem for \eqref{1.2} with $\varepsilon > 0$ and the initial data close to the constant ground state $(\bar{u}, 0)$.
In \cite{PWZ2014}, the global well-posedness of classical solutions for this system was proved, provided that the smooth initial data has a suitably small $L^2$ norm. Subsequently, the authors in \cite{WXY2016} proved that the strong solutions exist globally
in time by the method of energy estimates, when the initial data $(u_0-\bar{u}, {\bf v}_0)\in H^k$ with $k\geq2 $ was taken to be properly small in $H^1$. In addition, if the initial data further belongs to a homogeneous negative index Sobolev space $\dot{H}^{-s}$ for $s\in (0,\frac{d}{2})$,
they established the following asymptotic decay rates of solutions,
\begin{align}
	\begin{aligned} \label{W}
		&\| \nabla ^l(u-\overline{u},{\bf v})(t) \| _{L^2} \leq C(1+t)^{-\frac{s+l}{2}}, \quad  l = 0,1,\cdots  k-1,\\
&\| \nabla ^k(u-\overline{u},{\bf v})(t) \| _{L^2} \leq C(1+t)^{-\frac{s+k-1}{2}}.
\end{aligned}
\end{align}
However, it is clear that the decay rate of the highest $k$-th order derivative of the solution coincides with the lower one $k-1$-th order, which does not seem to be the optimal decay rate.
Then, the global well-posedness and long-time behavior of classical solutions to the Cauchy problem and initial boundary value problem with small initial entropy are shown in \cite{RWWZZ2019} and \cite{WWZ2021}, respectively.
It is worth noting that the works \cite{PWZ2014,  RWWZZ2019, WWZ2021, WXY2016} also derived the convergence rates of \eqref{1.2} toward the non-diffusive system ($\varepsilon=0$).

In this paper, we focus on the asymptotic dynamics of the singular chemotaxis system in multi-dimensions with $\varepsilon \geq 0$.
On the one hand, we investigate the optimal decay rates of the global solution and its derivatives within classical Sobolev spaces. In particular, we determine the optimal decay rate of the critical spatial derivative of the global solution, which improves the $k$-th order spatial derivative result of the global solution presented in  \cite{WXY2016}. On the other hand, we establish the lower bounds of these decay rates to demonstrate their optimality.
 
First of all, let us state the result of the global well-posedness for the system \eqref{1.2} that has already been established in \cite{WXY2016}.
\begin{proposition}[\!\!\cite{WXY2016}]
    \label{prop} Let $(u_0-\bar{u},{\bf v}_0) \in H^k(\mathbb{R}^d)$ with some integer $k\geq 2$ for some constant background state $\bar u >0.$
	Then for any constant $M_0>0$ with $\|\nabla^2 u_0\|_{L^2(\mathbb{R}^d)}^2 + \|\nabla^2 {\bf v}_0\|_{L^2(\mathbb{R}^d)}^2 \leq M_0^2, $ there exists a positive constant $\kappa$ depending on $M_0$
	such that if
    \begin{align}\label{1.0}
    \|u_0 - \bar{u}\|_{H^1(\mathbb{R}^d)}^2 + \|{\bf v}_0\|_{H^1(\mathbb{R}^d)}^2 \leq \kappa^2,
    \end{align}
   the system \eqref{1.2} admits a unique global solution $(u,{\bf v})\in C([0,+\infty), H^k({\mathbb{R}^d}))$ satisfying
    \begin{align}
     \begin{aligned}
        \|u(t)&-\bar{u}\|_{H^k(\mathbb{R}^d)}^2 + \|{\bf v}(t)\|_{H^k(\mathbb{R}^d)}^2\\
        &+\int_{0}^{t}{\big (}\|\nabla u(\tau)\|_{H^k(\mathbb{R}^d)}^2 + \|\nabla {\bf v}(\tau)\|_{H^{k-1}(\mathbb{R}^d)}^2 +  \|\nabla^{k+1} {\bf v}(\tau)\|_{L^2(\mathbb{R}^d)}^2{\big )}d\tau\\
        \leq & C(\|u_0 - \bar{u}\|_{H^k(\mathbb{R}^d)}^2 + \|{\bf v}_0\|_{H^k(\mathbb{R}^d)}^2 )
    \end{aligned}
     \end{align}
   for all $t \geq 0,$ where $C$ is a positive constant independent of $\kappa$, $t$ and $\varepsilon$.
\end{proposition}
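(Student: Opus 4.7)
I would combine a standard short-time existence theorem with a uniform a priori estimate, then extend by continuation. Setting $n := u-\bar u$, the system reads
\begin{equation*}
n_t-\Delta n = \bar u\,\nabla\!\cdot\!{\bf v}+\nabla\!\cdot\!(n{\bf v}),\qquad {\bf v}_t-\varepsilon\Delta{\bf v}-\nabla n=-\varepsilon\nabla|{\bf v}|^2,
\end{equation*}
a partially dissipative hyperbolic--parabolic system centered at $(0,0)$. Local well-posedness in $C([0,T_*);H^k)$ with $k\geq 2>d/2$ is standard: the $n$-equation is parabolic and the ${\bf v}$-equation is either parabolic ($\varepsilon>0$) or symmetrizable hyperbolic ($\varepsilon=0$), so a contraction argument yields a unique solution whose lifespan depends only on $\|(n_0,{\bf v}_0)\|_{H^k}$.

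For the uniform estimate I would test, at each level $j\in\{0,1,\dots,k\}$, the $\nabla^j n$-equation with $\nabla^j n$ and the $\nabla^j{\bf v}$-equation with $\bar u\,\nabla^j{\bf v}$. The linear cross contributions $\pm\bar u\!\int\nabla^j n\,\nabla\!\cdot\!\nabla^j{\bf v}$ cancel after integration by parts, producing
\begin{equation*}
\tfrac12\tfrac{d}{dt}\bigl(\|\nabla^j n\|_{L^2}^2+\bar u\|\nabla^j{\bf v}\|_{L^2}^2\bigr)+\|\nabla^{j+1}n\|_{L^2}^2+\bar u\varepsilon\|\nabla^{j+1}{\bf v}\|_{L^2}^2=\mathrm{NL}_j .
\end{equation*}
Summing over $j$ dissipates all derivatives of $n$, but the ${\bf v}$-dissipation degenerates as $\varepsilon\to 0$. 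To recover it uniformly in $\varepsilon\geq 0$ I would add the interaction functional $\mathcal I_j:=\int\nabla^j{\bf v}\cdot\nabla^{j+1}n$; differentiating it and substituting both equations gives
\begin{equation*}
\tfrac{d}{dt}\mathcal I_j=\|\nabla^{j+1}n\|_{L^2}^2-\bar u\|\nabla\!\cdot\!\nabla^j{\bf v}\|_{L^2}^2+R_j,
\end{equation*}
where $R_j$ is absorbable into the already-available dissipation of $n$. A small multiple of $\sum_j\mathcal I_j$ added to the basic energy therefore supplies the missing dissipation of $\|\nabla\!\cdot\!{\bf v}\|_{H^k}^2$, and since ${\bf v}=-\nabla\ln c$ is curl-free (a property preserved by the flow because ${\bf v}_t$ is a pure gradient) this controls every derivative of ${\bf v}$.

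The remaining task is to absorb the nonlinear terms $\mathrm{NL}_j$ and $R_j$. Every such term is bilinear or trilinear in $(n,{\bf v})$ and their derivatives, and I would handle it by Moser/Kato--Ponce commutator estimates that place one factor in $L^\infty$ and the others in $L^2$. The Gagliardo--Nirenberg inequality $\|f\|_{L^\infty}\lesssim \|\nabla f\|_{L^2}^{1-\theta}\|\nabla^2 f\|_{L^2}^{\theta}$ (valid in $d=2,3$ for suitable $\theta\in(0,1)$) bounds the $L^\infty$ factor by the small $H^1$-quantity (controlled by $\kappa$) together with the $\dot H^2$-seminorm (controlled by $M_0$). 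The $L^2$ factors are then absorbed into $\|\nabla^{j+1}n\|_{L^2}$, $\|\nabla^{j+1}{\bf v}\|_{L^2}$, or $\|\nabla\!\cdot\!{\bf v}\|_{H^k}$ provided $\kappa$ is chosen small depending on $M_0$. Integrating in time and applying a Grönwall lemma then closes the a priori estimate, and the usual bootstrap yields $T_*=\infty$.

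The main obstacle, which dictates the low-norm-small/high-norm-bounded structure of the hypothesis, is the top-order estimate at $j=k$: the nonlinear terms there naturally involve up to $k+1$ derivatives distributed across products, and one must guarantee that the \emph{small} factor (measured in $H^1$ and controlled by $\kappa$) always ends up in the $L^\infty$ slot while the merely \emph{bounded} factor (controlled by the $M_0$-bound on $\dot H^2$) sits in $L^2$. No uniform Sobolev embedding achieves this because smallness is not assumed in $H^k$; it requires the interpolation above together with a careful reorganization of every commutator, and this is precisely why the smallness constant $\kappa$ in the proposition must be allowed to depend on $M_0$.
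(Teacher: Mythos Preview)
The paper does not supply a proof of this proposition; it is quoted from \cite{WXY2016} and used as a black box. Your outline is the standard argument for such partially dissipative systems and matches the differential energy inequality that the paper later restates (also without proof) as Lemma~\ref{lemma 3-3-1}: basic $H^k$-energy estimates with the linear cross terms cancelling, a Kawashima-type correction $\int\nabla^j{\bf v}\cdot\nabla\nabla^j n$ to extract dissipation of $\mathrm{div}\,{\bf v}$ uniformly in $\varepsilon\geq 0$, and the curl-free structure of ${\bf v}$ (inherited from ${\bf v}=-\nabla\ln c$) to pass from $\mathrm{div}\,{\bf v}$ to full $\nabla{\bf v}$. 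One small correction: the interpolation $\|f\|_{L^\infty}\lesssim\|\nabla f\|_{L^2}^{1-\theta}\|\nabla^2 f\|_{L^2}^{\theta}$ is not available at $d=2$ (the would-be endpoint $\theta=0$ is the false embedding $\dot H^1(\mathbb{R}^2)\hookrightarrow L^\infty$); use instead $\|f\|_{L^\infty}\lesssim\|f\|_{L^2}^{1/2}\|\nabla^2 f\|_{L^2}^{1/2}$, which still pairs the $\kappa$-small factor with the $M_0$-bounded one and closes the estimate in the same way.
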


The main novelty of our paper is to study the optimal decay rates of the global solution, including their upper bounds and lower bounds.
 The main results are stated in the following theorems.


\begin{theorem}[Upper bounds]
    \label{th} 	
    Assume that, in addition to all of the conditions in Proposition \ref{prop},  the initial data $(u_0-\bar{u}, {\bf v}_0)\in L^1(\mathbb{R}^d)  \cap H^N(\mathbb{R}^d)$ for some integer $N \geq 2$. Then
    the solution $(u-\bar{u},{\bf v})$ to the system \eqref{1.2} with suitably small $\kappa$ in \eqref{1.0} has the following decay rates,
    \begin{align}\label{thn}
    	\Vert \nabla ^k(u-\bar{u},{\bf v})(t) \Vert _{L^2(\mathbb{R}^d)} \leq C(1+t)^{-\frac{d+2k}{4}},
    \end{align}
    for all $t \geq 0$ and $ k=0, 1, \ldots, N$, where $C$ is a positive constant independent of time $t$ and $\varepsilon$.
\end{theorem}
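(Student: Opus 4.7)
My plan is to combine a careful spectral analysis of the linearized system with the Fourier splitting method of Schonbek and the \emph{a priori} energy bounds from Proposition \ref{prop}. I would set $n := u - \bar{u}$ and rewrite \eqref{1.2} as $\partial_t U = \mathcal{L} U + \mathcal{N}(U)$ with $U = (n, \mathbf{v})^T$, where
\[
\mathcal{L} U = \begin{pmatrix} \Delta n + \bar{u}\, \nabla \cdot \mathbf{v} \\ \nabla n + \varepsilon \Delta \mathbf{v}\end{pmatrix},
\]
and $\mathcal{N}(U)$ collects the quadratic divergence-form terms $\nabla \cdot (n\mathbf{v})$ and $-\varepsilon \nabla |\mathbf{v}|^2$. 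Decomposing $\mathbf{v}$ into its curl-free and divergence-free parts in Fourier variables reduces the essential coupling to a $2 \times 2$ block whose characteristic roots $\lambda_\pm(\xi)$ satisfy $\mathrm{Re}\,\lambda_\pm(\xi) \leq -c |\xi|^2$ for $|\xi|$ near the origin and $\leq -c$ for $|\xi|$ bounded away, uniformly in $\varepsilon \geq 0$. This gives the linear semigroup estimate
\[
\bigl\|\nabla^k e^{t \mathcal{L}} U_0\bigr\|_{L^2(\mathbb{R}^d)} \leq C (1+t)^{-\frac{d}{4} - \frac{k}{2}} \bigl( \|U_0\|_{L^1} + \|\nabla^k U_0\|_{L^2} \bigr),
\]
whose rate matches the one claimed in \eqref{thn}.

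With the linear estimate in hand, I would close the nonlinear problem by an induction on $k$ that couples Duhamel's principle with Fourier splitting. Since $\mathcal{N}(U) = \nabla \cdot F(U)$ with $F(U)$ quadratic in $U$, convolving with $\nabla^k e^{t \mathcal{L}}$ gains an extra $(1+\tau)^{-1/2}$ factor coming from the additional derivative; together with $\|F(U)\|_{L^1} \lesssim \|U\|_{L^2}^2$, Sobolev interpolation, and the uniform $H^N$ control from Proposition \ref{prop}, this propagates the decay rate $(1+t)^{-d/4 - k/2}$ for $k = 0, 1, \ldots, N-1$ inductively, while carefully tracking the convolution in time to avoid logarithmic losses in the borderline dimension $d=2$ by splitting the Duhamel integral at $t/2$.

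The principal obstacle, and the reason the argument improves \cite{WXY2016}, is the critical top-order case $k = N$: a purely energy-based estimate of the nonlinearity at this level loses a factor of $(1+t)^{1/2}$ because it cannot absorb a further derivative. To recover the missing half-power I would start from the dissipation inequality at order $N$ provided by Proposition \ref{prop},
\[
\frac{d}{dt}\|\nabla^N U\|_{L^2}^2 + c\,\|\nabla^{N+1} U\|_{L^2}^2 \leq \mathcal{R}(t),
\]
choose the Schonbek splitting radius $R(t)^2 = C_0/(1+t)$, and bound the dissipation from below by $R(t)^2 \|\nabla^N U\|_{L^2}^2$ minus the low-frequency remainder $\int_{|\xi|\leq R(t)} |\xi|^{2N} |\widehat{U}|^2\, d\xi$. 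The low-frequency remainder is controlled through the spectral representation above together with the already-established optimal decays at orders $k < N$, while $\mathcal{R}(t)$ is absorbed via interpolation and the global $H^N$ bound. Multiplying by $(1+t)^{d/2 + N}$ and integrating then closes \eqref{thn} at the top level.
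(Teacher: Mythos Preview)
Your overall plan---spectral analysis of the linear semigroup, Duhamel's principle for the lowest orders, and Fourier splitting to push the induction up to $k=N-1$---matches the paper's architecture closely. The genuine gap is at the critical top order $k=N$.

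The dissipation inequality you invoke there,
\[
\frac{d}{dt}\|\nabla^N U\|_{L^2}^2 + c\,\|\nabla^{N+1} U\|_{L^2}^2 \leq \mathcal{R}(t)
\]
with $c>0$ uniform in $\varepsilon$, is not furnished by Proposition~\ref{prop} and is in fact not available from the structure of the system. A direct $L^2$ energy estimate at level $N$ (this is the paper's Lemma~\ref{zuiyou}, equation~\eqref{3.57}) yields only $\|\nabla^{N+1}n\|_{L^2}^2 + \varepsilon\|\nabla^{N+1}\mathbf{v}\|_{L^2}^2$ as dissipation, because the $\mathbf{v}$-equation carries only the diffusion $\varepsilon\Delta\mathbf{v}$. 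Fourier splitting on this term produces the damping $\tfrac{R}{1+t}\|\nabla^N n\|_{L^2}^2$ for $n$ but only $\tfrac{\varepsilon R}{1+t}\|\nabla^N\mathbf{v}\|_{L^2}^2$ for $\mathbf{v}$, which degenerates as $\varepsilon\to 0$ and therefore cannot close \eqref{thn} with a constant independent of $\varepsilon$. Controlling the low-frequency remainder via the spectral representation does not rescue the argument, since the missing piece is the damping of the high-frequency part of $\nabla^N\mathbf{v}$, not the low-frequency tail.

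The paper does \emph{not} use Fourier splitting at the top order. Instead it exploits the fact that $\|\nabla^N(n,\mathbf{v})\|_{L^2}^2$ already appears, with an $\varepsilon$-independent coefficient, in the \emph{dissipation} $\mathcal{F}_{N-1}$ of the $(N{-}1)$-level energy inequality (Lemma~\ref{lemma 3-3-1}). Multiplying that inequality by $(1+t)^{N+d/2-1+\epsilon_0}$ and using the already-established decay of $\|\nabla^{N-1}(n,\mathbf{v})\|_{H^1}^2$ yields the time-weighted integrability
\[
\int_0^t (1+\tau)^{N+\frac{d}{2}-1+\epsilon_0}\,\|\nabla^N(n,\mathbf{v})\|_{L^2}^2\,d\tau \leq C(1+t)^{\epsilon_0}
\]
(Lemma~\ref{lemma 3.1}). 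The $N$-level energy estimate is then multiplied by $(1+t)^{N+d/2+\epsilon_0}$ and integrated in time; the term generated by differentiating the weight is precisely the integral above, and no lower bound on the dissipation via Fourier splitting is needed (Lemma~\ref{zuiyou}). This two-step time-weighted argument---integrability of $\|\nabla^N U\|_{L^2}^2$ from the $(N{-}1)$-level dissipation, then a weighted estimate at level $N$---is the ingredient your proposal is missing.
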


\begin{remark}
\label{remark 1.4}
It follows from Theorem \ref{th} that the following convergence rates for the critical spatial derivative ($N$-th order) hold true:
$$	
\| \nabla ^N(u-\bar{u},{\bf v})(t) \| _{L^2} \leq C(1+t)^{-\frac{s+N}{2}},
$$
which improves and complements the $N$-order decay rates in \cite{WXY2016} as \eqref{W} with $s=\frac{d}{2}$. Moreover, the convergence rates are the same as those of the solutions to heat equations.
\end{remark}

\begin{theorem}[Lower bounds]
\label{th1}

 Assume that,  in addition to all of the conditionss in Theorem \ref{th}, 
\begin{align}\label{-1}
\int_{{\mathbb{R}^d}}(u_0-\bar{u}, {\bf v}_0)dx \neq0.
\end{align}
Then there exists a suitable small $\kappa$ in \eqref{1.0}, such that the solution $(u-\bar{u},{\bf v})$ to the system \eqref{1.2} exhibits decay rates as follows
	\begin{align}\label{thn2}
				\Vert \nabla ^k(u-\bar{u},{\bf v})(t) \Vert _{L^2(\mathbb{R}^d)}\geq {\tilde{c}}(1+t)^{-\frac{d+2k}{4}},  \quad for \quad k=0, 1, \ldots, N,
		\end{align}
	where $\tilde{c}$ is a positive constant  independent of time $t$ and $\varepsilon$.
\end{theorem}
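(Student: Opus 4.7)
The plan is a linear--nonlinear decomposition: show that the large-time behavior of $(n,{\bf v}):=(u-\bar u,{\bf v})$ is dictated by the semigroup of the linearized system (which exhibits a heat-equation-type lower bound), and that the nonlinear remainder decays strictly faster, so that the reverse triangle inequality transfers the bound to the full solution. Linearizing \eqref{1.2} around $(\bar u,0)$ gives
\[
n_t - \Delta n = \bar u\,\nabla\!\cdot\!{\bf v}, \qquad {\bf v}_t - \varepsilon\Delta{\bf v} = \nabla n.
\]
After Fourier transform, the coupling between $\hat n$ and the compressional component $\hat w := i\xi\cdot\hat{\bf v}$ reduces to a $2\times 2$ symbol with eigenvalues
\[
\lambda_\pm(\xi) = -\tfrac{1+\varepsilon}{2}|\xi|^2 \pm \tfrac{1}{2}\sqrt{(1-\varepsilon)^2|\xi|^4 - 4\bar u\,|\xi|^2} = -\tfrac{1+\varepsilon}{2}|\xi|^2 \pm i\sqrt{\bar u}\,|\xi| + O(|\xi|^3)
\]
for small $|\xi|$; hence $|e^{\lambda_\pm t}|^2 = e^{-(1+\varepsilon)|\xi|^2 t}$, and the propagator has the same Gaussian envelope as a heat kernel with a bounded complex oscillation superimposed. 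Any solenoidal component of ${\bf v}$ decouples and satisfies a pure heat equation, contributing additively.

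Writing the linearized solution $(n_L,{\bf v}_L)=e^{t\mathcal{L}}(n_0,{\bf v}_0)$ explicitly via this eigendecomposition and restricting Plancherel's identity to a low-frequency ball $|\xi|\le r_0$, the assumption \eqref{-1} together with continuity of the Fourier transform at the origin yields
\[
|\hat n_0(\xi)|^2 + |\hat{\bf v}_0(\xi)|^2 \ge c_0 > 0 \qquad \text{for } |\xi|\le r_0,
\]
with $r_0$ small enough for the eigenvalue expansion to be valid. A direct computation then produces
\[
\|\nabla^k(n_L,{\bf v}_L)(t)\|_{L^2}^2 \ge c\int_{|\xi|\le r_0}|\xi|^{2k}\,e^{-(1+\varepsilon)|\xi|^2 t}\,d\xi \;-\; |R(t)|,
\]
where $R(t)=\int_{|\xi|\le r_0}|\xi|^{2k}e^{-(1+\varepsilon)|\xi|^2 t}\cos(2\sqrt{\bar u}\,|\xi|t)\,d\xi$ is the oscillatory piece coming from the cross product $e^{\lambda_+ t}\overline{e^{\lambda_- t}}$. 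Rescaling $\xi=\eta/\sqrt{1+t}$ shows the non-oscillatory integral is exactly of order $(1+t)^{-(d+2k)/2}$, while the Riemann--Lebesgue lemma applied to the radial cosine transform gives $|R(t)|=o((1+t)^{-(d+2k)/2})$, whence $\|\nabla^k(n_L,{\bf v}_L)(t)\|_{L^2}\ge c(1+t)^{-(d+2k)/4}$ for $t$ sufficiently large.

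The main obstacle is the complementary upper bound on the nonlinear correction $(n_N,{\bf v}_N):=(n,{\bf v})-(n_L,{\bf v}_L)$, which by Duhamel's formula satisfies
\[
(n_N,{\bf v}_N)(t)=\int_0^t e^{(t-s)\mathcal{L}}\,\nabla G(n,{\bf v})(s)\,ds,
\]
with $G$ the quadratic nonlinearity built from $n{\bf v}$ and $\varepsilon|{\bf v}|^2$. One must show
\[
\|\nabla^k(n_N,{\bf v}_N)(t)\|_{L^2} \le C(1+t)^{-\frac{d+2k}{4}-\delta}
\]
for some $\delta>0$. The eigenvalue analysis supplies the linear estimates $\|\nabla^j e^{t\mathcal{L}}f\|_{L^2}\le C(1+t)^{-\frac{d}{4}-\frac{j}{2}}\|f\|_{L^1}$ and $\|\nabla^j e^{t\mathcal{L}}f\|_{L^2}\le C t^{-j/2}\|f\|_{L^2}$; combined with Theorem~\ref{th} and Gagliardo--Nirenberg they give $\|G(s)\|_{L^1}\le C(1+s)^{-d/2}$ and $\|G(s)\|_{L^2}\le C(1+s)^{-3d/4}$. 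Splitting the time integral at $s=t/2$, using the $L^1$-input bound on $[0,t/2]$ and the $L^2$ bound with derivative gain on $[t/2,t]$, produces the desired factor $(1+t)^{-\delta}$ for some $\delta>0$ depending only on $d\in\{2,3\}$ (with at worst logarithmic losses in $d=2$, which are harmless). The reverse triangle inequality $\|\nabla^k(n,{\bf v})\|_{L^2}\ge \|\nabla^k(n_L,{\bf v}_L)\|_{L^2}-\|\nabla^k(n_N,{\bf v}_N)\|_{L^2}$ then yields \eqref{thn2} for $t$ large, and continuity in $t$ absorbs any bounded initial interval into the constant $\tilde c$.
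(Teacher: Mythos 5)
Your overall decomposition matches the paper's: write $(n,\mathbf{v})=(n_L,\mathbf{v}_L)+(n_N,\mathbf{v}_N)$ with the linear semigroup part carrying the lower bound, estimate the Duhamel remainder from above, and conclude by the reverse triangle inequality. But the way you control the remainder diverges from the paper at a crucial point, and as written your route has a gap.

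The paper does \emph{not} claim that the Duhamel remainder decays strictly faster than $(1+t)^{-(d+2k)/4}$. Instead (see Lemma 4.4 in Section 4.3, and the estimates \eqref{S}, \eqref{SS}), it trades one factor of decay on $(n,\mathbf{v})$ for a factor $\kappa$ of smallness of the initial data, obtaining $\|\nabla^k(\underline{n},\underline{\mathbf{v}})\|_{L^2}\le \widetilde{C}\gamma(1+t)^{-(d+2k)/4}$ with $\gamma$ proportional to $\kappa$, and only for $k=0,1$. Choosing $\kappa$ small makes $\widetilde{C}\gamma<\tfrac12\tilde c$, which yields the lower bound for $k=0,1$ \emph{and for all $t\ge0$}, not just $t$ large. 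The extension to $2\le k\le N$ is then done by Gagliardo--Nirenberg interpolation $\|\nabla n\|_{L^2}\le C\|n\|_{L^2}^{1-1/k}\|\nabla^k n\|_{L^2}^{1/k}$, combining the lower bound at $k=1$ with the upper bound at $k=0$. This is precisely where the hypothesis ``$\kappa$ suitably small'' in the theorem statement is consumed.

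Your alternative --- proving $\|\nabla^k(n_N,\mathbf{v}_N)(t)\|_{L^2}\le C(1+t)^{-(d+2k)/4-\delta}$ --- is plausible for small $k$ by exploiting the divergence structure of the nonlinearity (the extra $\nabla$ lands on the semigroup in the $[0,t/2]$ piece and the convolution gains a factor $(1+t)^{-1/2}$, modulo a $\log$ in $d=2$). But it breaks down at the critical order $k=N$: on $[t/2,t]$ one is forced to put the derivatives onto $G=(n\mathbf{v},\varepsilon|\mathbf{v}|^2)$, which requires $\nabla^{N+1}(n,\mathbf{v})$ in $L^2$ with a decay rate; Theorem~\ref{th} only reaches order $N$. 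The parabolic-smoothing route $\|\nabla^{N+1}e^{(t-s)\mathcal{L}}G\|\lesssim(t-s)^{-1/2}\|\nabla^N G\|$ works in the density component but fails for $\mathbf{v}$ when $\varepsilon=0$, and the theorem is supposed to be uniform in $\varepsilon\ge0$. You therefore cannot reach $k=N$ directly by Duhamel, and your proposal never invokes the interpolation step that the paper uses for this. In addition, your ``$|\hat n_0|^2+|\hat{\mathbf v}_0|^2\ge c_0>0$ near the origin'' must be translated carefully into a lower bound on the propagated profile: the matrix $\widehat G(t,\xi)$ could in principle annihilate the specific combination appearing in $\hat n_L$ or $\hat{\mathbf v}_L$; the paper handles this with an explicit case analysis (Cases 1--3 according to whether $M_n$ and $M_v$ vanish). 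Finally, the "continuity absorbs the bounded interval" step needs a quantitative lower bound uniform in $\varepsilon$ on $[0,T]$, which is not automatic; the paper's smallness-based argument sidesteps this entirely because its bound is valid for every $t\ge0$.

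In short: same architecture, but you replace the paper's ``small-constant, same-rate'' control of the remainder by ``strictly-faster-rate'' control, and the latter cannot be pushed to $k=N$ nor made $\varepsilon$-uniform without the interpolation argument, which is missing.
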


\begin{remark}
\label{remark 1.5}
The condition \eqref{-1} is quite important in the proof of the lower bound for the linearized problem. As pointed out in \cite{CPT2021}, this kind of condition is weaker than
those in most of the previous results, where the differentiability of the Fourier transform of
the initial disturbance is required in general.
\end{remark}

Finally, as a corollary, the results can be transferred back to the original system \eqref{1.1}.

\begin{theorem} \label{th2} Assume $(u_0, \nabla{\rm ln }c_0) \in L^1(\mathbb{R}^d)  \cap H^N(\mathbb{R}^d)$ for some integer $N \geq 2$ with $(u_0,c_0)(x)=(u,c)(x,0)$ satisfying the compatibility condition ${\bf v}_0=-\nabla{\rm ln }c_0.$ Then for any constant $M_0 > 0$ with $\|\nabla^2 u_0\|_{L^2(\mathbb{R}^d)}^2 + \|\nabla^3 {\rm ln }c_0\|_{L^2(\mathbb{R}^d)}^2 \leq M_0^2,$ there exists a positive constant $\kappa$ depending on $M_0$ such that if
		$$\|u_0-\bar{u}\|_{H^1(\mathbb{R}^d)}^2 + \|\nabla{\rm ln }c_0\|_{H^1(\mathbb{R}^d)}^2 \leq \kappa^2,$$
		for some constant $\bar{u}\geq 0$, the system \eqref{1.1} admits a unique global classical solution.
Furthermore, there is a positive constant $C$ such that
		$$\|u-\bar{u}\|_{L^\infty(\mathbb{R}^d)} \leq C(1+t)^{-\frac{d+2}{4}},$$
		$$\|c\|_{L^\infty(\mathbb{R}^d)} \leq Ce^{-\bar{u}t},$$
where $C$ is independent of time $t$.
	
\end{theorem}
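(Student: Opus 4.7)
The plan is to transfer the results already obtained for the reformulated system \eqref{1.2} back to the original system \eqref{1.1} through the Cole--Hopf change of variables ${\bf v}=-\nabla\ln c$, and then upgrade the $L^2$-decay from Theorem \ref{th} to the $L^\infty$ statements.

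\textbf{Step 1 (Reduction).} Given initial data $(u_0,c_0)$ for \eqref{1.1}, set ${\bf v}_0:=-\nabla\ln c_0$, which is well-defined by the assumption $\nabla\ln c_0\in L^1\cap H^N$ and the compatibility condition. The hypotheses on $(u_0,\nabla\ln c_0)$ translate verbatim into those on $(u_0-\bar u,{\bf v}_0)$ required by Proposition \ref{prop} and Theorem \ref{th}. Hence Proposition \ref{prop} provides a unique global solution $(u,{\bf v})\in C([0,\infty);H^N)$ to \eqref{1.2}, and Theorem \ref{th} gives
\[
\|\nabla^k(u-\bar u,{\bf v})(t)\|_{L^2}\le C(1+t)^{-(d+2k)/4},\qquad k=0,1,\dots,N.
\]

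\textbf{Step 2 (Reconstruction of $c$).} With $u$ in hand, I solve the linear scalar parabolic equation $c_t=\varepsilon\Delta c-uc$ with datum $c_0>0$; the weak maximum principle yields a unique positive smooth global solution $c$. To close the reduction I must verify ${\bf v}=-\nabla\ln c$. Setting $\tilde{\bf v}:=-\nabla\ln c$, differentiating $(\ln c)_t=\varepsilon(\Delta\ln c+|\nabla\ln c|^2)-u$ and using that $\nabla\ln c$ is curl-free (so that $\nabla|\nabla\ln c|^2=2(\nabla\ln c\cdot\nabla)\nabla\ln c$), one finds that $\tilde{\bf v}$ satisfies exactly the same quasilinear equation as ${\bf v}$ in \eqref{1.2}. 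Subtracting gives a \emph{linear} parabolic equation for $w:={\bf v}-\tilde{\bf v}$ with smooth coefficients depending on $({\bf v},\tilde{\bf v})$ and vanishing initial data $w(\cdot,0)={\bf v}_0+\nabla\ln c_0\equiv 0$, so $w\equiv 0$ by uniqueness. Therefore $(u,c)$ is the desired classical global solution of \eqref{1.1}.

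\textbf{Step 3 (Decay estimates).} For $u-\bar u$ I apply Gagliardo--Nirenberg in $\mathbb R^d$, $d\in\{2,3\}$, with $k_0=2>d/2$,
\[
\|u-\bar u\|_{L^\infty}\le C\|u-\bar u\|_{L^2}^{1-d/4}\,\|\nabla^2(u-\bar u)\|_{L^2}^{d/4},
\]
and combine it with Step 1 to get $\|u-\bar u\|_{L^\infty}\le C(1+t)^{-d/2}\le C(1+t)^{-(d+2)/4}$ for $d=2,3$. For $c$ I set $\tilde c:=e^{\bar u t}c$, which satisfies $\tilde c_t=\varepsilon\Delta\tilde c+(\bar u-u)\tilde c$; testing against $p|\tilde c|^{p-2}\tilde c$, dropping the nonnegative dissipation, using Gronwall, and sending $p\to\infty$ gives
\[
\|c(t)\|_{L^\infty}\le\|c_0\|_{L^\infty}\exp\!\Bigl(-\bar u t+\int_0^t\|u(\tau)-\bar u\|_{L^\infty}\,d\tau\Bigr),
\]
and the integral is controlled by the just-proved $L^\infty$ decay of $u-\bar u$, yielding the stated bound $\|c\|_{L^\infty}\le Ce^{-\bar u t}$.

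\textbf{Main obstacle.} The main delicate point is the Cole--Hopf consistency in Step 2: despite the quadratic term $|\nabla\ln c|^2$, the difference $w={\bf v}+\nabla\ln c$ must be recognized as satisfying a \emph{linear} parabolic equation so that uniqueness applies. A secondary technical issue is the borderline integrability of $\|u-\bar u\|_{L^\infty}\sim(1+t)^{-1}$ in dimension $d=2$, which threatens to produce a logarithmic correction to the exponential decay of $c$; this is absorbed into the constant $C$ (or by slightly sharpening the $L^\infty$ estimate using a higher-order Gagliardo--Nirenberg interpolation).
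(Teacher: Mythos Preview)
Your proposal is essentially correct and in Steps~1 and~3 parallels the paper closely: both reduce to the transformed system via Proposition~\ref{prop} and Theorem~\ref{th}, and both use Gagliardo--Nirenberg interpolation for $\|u-\bar u\|_{L^\infty}$.

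The route diverges in how $c$ is handled. The paper exploits the Cole--Hopf identity $(\ln c)_t=-\varepsilon\,{\rm div}\,{\bf v}+\varepsilon|{\bf v}|^2-u$ to write the explicit pointwise formula
\[
c(x,t)=c_0(x)\exp\Bigl(-\bar u t+\int_0^t\bigl(\bar u-u+\varepsilon(|{\bf v}|^2-{\rm div}\,{\bf v})\bigr)\,d\tau\Bigr),
\]
and then bounds each of $\int_0^t\|u-\bar u\|_{L^\infty}$, $\int_0^t\||{\bf v}|^2\|_{L^\infty}$, $\int_0^t\|\nabla{\bf v}\|_{L^\infty}$ separately using Theorem~\ref{th}. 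Your maximum-principle/Gronwall argument on the scalar equation $c_t=\varepsilon\Delta c-uc$ is more economical: it needs only $\int_0^t\|u-\bar u\|_{L^\infty}\,d\tau$ and no information on ${\bf v}$ at all. Your Step~2 consistency check (that ${\bf v}$ and $-\nabla\ln c$ coincide, via the linear equation for their difference) is also more explicit than the paper's one-line appeal to ``parabolic regularity theory.''

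On the $d=2$ borderline issue you flag: you are right that $\|u-\bar u\|_{L^\infty}\sim(1+t)^{-1}$ fails to be integrable, and neither of your suggested fixes actually works---the logarithmic factor cannot be absorbed into the constant in front of $e^{-\bar u t}$, and higher-order Gagliardo--Nirenberg gives the same $(1+t)^{-d/2}$ rate since the $L^2$ decay of $\nabla^k(u-\bar u)$ is optimal. Note, however, that the paper's own estimate $\int_0^t(1+\tau)^{-(d+2)/4}\,d\tau\le C(1+t)^{(2-d)/4}$ is equally invalid at $d=2$ (the integral is $\ln(1+t)$), so this is a shared gap rather than a defect of your approach. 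The honest conclusion in $d=2$ is $\|c\|_{L^\infty}\le C(1+t)^{C}e^{-\bar u t}$, hence exponential decay at any rate strictly below $\bar u$.
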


Now, let us outline the strategy for proving our results. The proof can be divided into four steps. 
First, we introduce a perturbation form of the system \eqref{1.2}. Then, by utilizing the representation of the Green's function and spectral analysis of the semigroup to the
corresponding linear system, we obtain the $L^2$ time decay rates, including both upper and lower bounds, for the linearized system.
Second, we derive the upper bounds for the optimal decay rates of the nonlinear system \eqref{1.2} within the classical Sobolev space $H^k$ framework, which differs from the previous works for $\varepsilon=0$ in \cite{DL2014} and for $\varepsilon\geq 0$ in \cite{WXY2016}.
These spaces help us to obtain the convergence rates for the critical spatial derivative (the $N$-th order) and enable us to deduce lower bounds from the upper bounds of the optimal decay rates, which match those of the heat equation.
 To determine the decay rate of higher-order spatial derivatives over time, we apply Fourier splitting methods. Specifically, by focusing on the time integrability of the density's dissipative term rather than directly absorbing it into the dissipation itself, we identify the critical derivative influencing the decay rate. 
 Third, we establish the lower bounds on the convergence rates of the nonlinear system. To do so, we use the decay rate of the linear semigroup, Duhamel's principle, and the upper bounds of the optimal decay rates. 
 Finally, we transfer these results back to the original system  \eqref{1.1} using the Cole-Hopf transformation.


The paper is organized as follows. Section 2 introduces basic inequalities and reformulates the system \eqref{1.1}. 
Section 3 presents a detailed spectral analysis of the semigroup and establishes the $L^2$ decay rates for the linearized system. 
Section 4 is dedicated to the optimal time decay rates of the global solution for the nonlinear system. 
In the final section, the convergence results of the $L^{\infty}$ norm for the global solution to the original system \eqref{1.1} are proved based on the transformed system.


{\bf Notation:} Throughout this paper, we use $H^k(\mathbb{R} ^d)(k \in \mathbb{R} , d=2,3)$ to denote the classical Sobolev spaces with norm $\| \cdot \Vert _{H^k}$
and $L^p(\mathbb{R} ^d)(1 \leq p \leq \infty)$ to denote the usual $L^p$ spaces with norm $\| \cdot   \Vert  _{L^p}$. 
The symbol $\nabla ^l$ with an integer $l \geq 0$ stands for the usual any spatial derivatives of order $l$. 
For instance, we define $\nabla ^l z = \left\{\partial _x ^\alpha z_i: | \alpha | = l, i=1,2 \right\}, z=(z_1,z_2)£¬$ in 2D and $\nabla ^l z = \left\{\partial _x ^\alpha z_i: | \alpha | = l, i=1,2,3 \right\}, z=(z_1,z_2,z_3)$ in 3D. For the sake of simplicity, we write $\int {f} dx := \int _{\mathbb{R} ^d}{f} dx$
and $\| (A,B)\|_X := \|A\|_X + \|B\|_X .$

\bigskip

\section {Preliminaries}

In this section, let us first recall some elementary inequalities that will be used extensively in later sections.

\begin{lemma}[Sobolev interpolation inequality]
\label{lemma 2.1}
  Let $2 \leq p \leq +\infty$ and $0 \leq l,k \leq m$. If $p = +\infty$,  we require furthermore that
   $l \geq k+1.$   
   Then, if $\nabla ^l u \in L^2(\mathbb{R}^d)$ and $ u \in L^2(\mathbb{R}^d)$, we have
   $\nabla ^k u \in L^p(\mathbb{R}^d)$. Moreover, there exists a positive constant $C$ dependent on $k, l, m, p$ such that
   \begin{align*}
       \Vert \nabla ^k u \Vert _{L^p} \leq C\Vert \nabla ^l u \Vert _{L^2}^\theta  \Vert u \Vert _{L^2}^{1-\theta },
   \end{align*}
   where $0 \leq \theta \leq 1$ satisfying
     $$\frac{1}{p} =\frac{k}{d}+\Big(\frac{1}{2}-\frac{l}{d}\Big)\theta + {\frac{1}{2}}{(1-\theta )}.$$
\end{lemma}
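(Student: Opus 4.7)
The plan is to prove this Gagliardo--Nirenberg type interpolation inequality by Fourier analysis: a frequency--splitting argument handles the endpoints $p = 2$ and $p = \infty$, and Riesz--Thorin interpolation fills in the intermediate range $2 < p < \infty$. The precise value of $\theta$ in the displayed formula is forced by dimensional analysis, since both sides of the proposed inequality scale identically under the dilation $u(x) \mapsto u(\lambda x)$ for $\lambda > 0$. Consequently, once any estimate of the form $\|\nabla^k u\|_{L^p} \leq C \|\nabla^l u\|_{L^2}^{\alpha} \|u\|_{L^2}^{\beta}$ is obtained, comparing powers of $\lambda$ on the two sides pins down $\alpha = \theta$ and $\beta = 1-\theta$ with $\theta$ satisfying exactly the relation stated in the lemma.

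For the base case $p = 2$, Plancherel's identity gives
$$\|\nabla^k u\|_{L^2}^2 = c_d \int_{\mathbb{R}^d} |\xi|^{2k} |\hat{u}(\xi)|^2 \, d\xi.$$
Cutting the integral at a radius $R > 0$ controls the low-frequency part by $R^{2k} \|u\|_{L^2}^2$ and the high-frequency part by $R^{2(k-l)} \|\nabla^l u\|_{L^2}^2$, and optimizing $R \sim (\|\nabla^l u\|_{L^2}/\|u\|_{L^2})^{1/l}$ produces the inequality with $\theta = k/l$. For the endpoint $p = \infty$, I would use the dual Hausdorff--Young bound $\|\nabla^k u\|_{L^\infty} \leq C \int |\xi|^k |\hat{u}| \, d\xi$, split at $R$, and apply Cauchy--Schwarz on each piece; this yields control by $R^{k + d/2} \|u\|_{L^2}$ on low frequencies and by $R^{k - l + d/2} \|\nabla^l u\|_{L^2}$ on high frequencies (the latter requiring $2(l-k) > d$ for convergence of the radial integral), and optimization in $R$ recovers the inequality with $\theta = (k + d/2)/l$, matching the scaling prediction.

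For $2 < p < \infty$ one can either apply Riesz--Thorin interpolation directly between the two endpoint estimates already obtained, or first prove $\|\nabla^k u\|_{L^p} \leq C \|\nabla^{k + s} u\|_{L^2}$ via Hardy--Littlewood--Sobolev with $s = d(1/2 - 1/p)$ and then compose with the $L^2$ interpolation inequality. The main technical obstacle is the $L^\infty$ endpoint, since the high-frequency integral $\int_{|\xi| > R} |\xi|^{2(k-l)} \, d\xi$ converges only when $l - k > d/2$: this is tight in $d = 2$ but degenerates logarithmically at $l = k + 1$ in $d = 3$. The stated hypothesis $l \geq k + 1$ is therefore to be read in conjunction with the scaling-compatibility condition $l - k > d/2$, or else the direct splitting argument can be replaced at the endpoint by the Sobolev embedding $H^s(\mathbb{R}^d) \hookrightarrow L^\infty(\mathbb{R}^d)$ with $s > d/2$ applied to $\nabla^k u$, which handles the borderline case at the cost of an infinitesimal amount of extra regularity that is then absorbed by the interpolation exponent.
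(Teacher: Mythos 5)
The paper states Lemma~\ref{lemma 2.1} as a standard Gagliardo--Nirenberg interpolation inequality and gives no proof at all (it is cited as a known tool), so there is no paper proof to compare against; I will instead evaluate your sketch on its own terms.

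Your overall strategy --- Plancherel plus frequency splitting at both endpoints, then fill in $2<p<\infty$ --- is a sound and essentially standard route, and the $p=2$ computation is correct with $\theta=k/l$. Three things need fixing. First, \emph{Riesz--Thorin is the wrong tool for the intermediate range}: it interpolates bounds for a fixed linear operator between Lebesgue spaces, whereas here the target bounds have $u$-dependent right-hand sides whose exponents change with $p$. What actually works is the elementary log-convexity of $L^p$ norms for the fixed function $\nabla^k u$, namely $\|\nabla^k u\|_{L^p}\le\|\nabla^k u\|_{L^2}^{1-t}\|\nabla^k u\|_{L^\infty}^{t}$ with $1/p=(1-t)/2$; substituting the two endpoint bounds then yields exactly the claimed $\theta$. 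Your alternative route via the homogeneous Sobolev embedding $\dot H^{\,d(1/2-1/p)}\hookrightarrow L^p$ composed with $L^2$ interpolation of fractional derivatives is correct and is in fact the cleaner way to finish. Second, \emph{your dimension bookkeeping at the $L^\infty$ endpoint is backwards}: with $l=k+1$ the high-frequency integral $\int_{|\xi|>R}|\xi|^{2(k-l)}\,d\xi=\int_{|\xi|>R}|\xi|^{-2}\,d\xi$ diverges \emph{logarithmically in $d=2$} (critical exponent $l-k=d/2=1$) and \emph{linearly, not logarithmically, in $d=3$} (since $l-k=1<3/2=d/2$). Third, and most substantively, you are right to be suspicious of the bare hypothesis $l\ge k+1$ at $p=\infty$, but the actual operative constraint is already built into the requirement $0\le\theta\le1$: for $p=\infty$ the exponent relation forces $\theta=(k+d/2)/l$, so $\theta\le1$ is precisely $l-k\ge d/2$, which in $d=3$ pushes one to $l\ge k+2$. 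Even then the borderline case $\theta=1$ (i.e. $l-k=d/2$) is the well-known exceptional case of Nirenberg's theorem --- this is the failure of $H^{d/2}(\mathbb{R}^d)\hookrightarrow L^\infty(\mathbb{R}^d)$ --- so a careful statement requires $\theta<1$ there rather than appealing to an ``infinitesimal amount of extra regularity.'' In practice the paper only invokes the lemma with $\theta<1$, so no harm is done downstream, but your proof sketch should either restrict to $\theta<1$ outright at $p=\infty$ or invoke the standard proof of the exceptional Nirenberg case rather than the direct splitting.
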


For the following commutator estimate, we refer to \cite{AA2002}.
\begin{lemma} \label{lemma 2.2}
	Let $k \geq 1$ be an integer and define the commutator
	$${[\nabla ^k, f]g} := 
	\nabla ^k{(fg)} - f\nabla ^k g.$$
	Then we have
	$$\Vert {[\nabla ^k, f]g} \Vert_{L^2} \leq C\Vert \nabla f \Vert _{L^\infty}\Vert \nabla ^{k-1}g \Vert_{L^2} + C\Vert \nabla ^k f \Vert _{L^2} \Vert g \Vert_{L^\infty} ,$$
	where $k$ is the sole variable that affects the positive constant $C$.
\end{lemma}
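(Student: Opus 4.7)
The estimate is a classical Moser/Kato--Ponce type commutator bound, and my plan is to reduce it to the Gagliardo--Nirenberg interpolation of Lemma 2.1 combined with Hölder's and Young's inequalities. First I would expand the derivative of the product by the Leibniz rule,
$$\nabla^k(fg)=\sum_{j=0}^{k}\binom{k}{j}\nabla^j f\,\nabla^{k-j}g,$$
and observe that the $j=0$ term is exactly $f\nabla^k g$, which cancels in the commutator. Hence
$$[\nabla^k,f]g=\sum_{j=1}^{k}C_{k,j}\,\nabla^j f\,\nabla^{k-j}g,$$
and it suffices to control each $L^2$-norm $\|\nabla^j f\,\nabla^{k-j}g\|_{L^2}$ for $j=1,\ldots,k$ by the right-hand side of the stated inequality.

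For each such $j$, I would apply Hölder's inequality with a conjugate pair $(p_j,q_j)$ satisfying $1/p_j+1/q_j=1/2$, and then interpolate each factor via Lemma 2.1. Concretely, write $\nabla^j f=\nabla^{j-1}(\nabla f)$ and interpolate between $\nabla f\in L^\infty$ and $\nabla^k f=\nabla^{k-1}(\nabla f)\in L^2$ with parameter $\theta_1=(j-1)/(k-1)$, and interpolate $\nabla^{k-j}g$ between $g\in L^\infty$ and $\nabla^{k-1}g\in L^2$ with parameter $\theta_2=(k-j)/(k-1)$. The dimensional balance formula in Lemma 2.1 then forces the Hölder exponents $p_j,q_j$ to lie in $[2,\infty]$ and to be conjugate, so that
$$\|\nabla^j f\,\nabla^{k-j}g\|_{L^2}\le C\,\|\nabla^k f\|_{L^2}^{\theta_1}\|\nabla f\|_{L^\infty}^{1-\theta_1}\,\|\nabla^{k-1}g\|_{L^2}^{\theta_2}\|g\|_{L^\infty}^{1-\theta_2}.$$

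Since $\theta_1+\theta_2=1$ by construction, I would regroup the right-hand side as $(\|\nabla^k f\|_{L^2}\|g\|_{L^\infty})^{\theta_1}(\|\nabla f\|_{L^\infty}\|\nabla^{k-1}g\|_{L^2})^{\theta_2}$ and apply the weighted Young inequality $X^{\theta_1}Y^{\theta_2}\le \theta_1 X+\theta_2 Y$ to separate the two factors. Summing the resulting $k$ contributions yields the lemma, with $C$ depending only on $k$ through the binomial coefficients and the Gagliardo--Nirenberg constants.

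The proof is essentially bookkeeping rather than containing any deep obstacle: the only point deserving care is verifying that the interpolation parameters are admissible (in $[0,1]$ with Hölder exponents in $[2,\infty]$) for every intermediate $j$ in the dimensions $d=2,3$ relevant to the paper. The endpoint cases $j=1$ and $j=k$ correspond respectively to $\theta_1=0$ and $\theta_2=0$, which degenerate the interpolation to trivial embeddings, and the case $k=1$ is immediate from the identity $[\nabla,f]g=(\nabla f)g$ without any interpolation.
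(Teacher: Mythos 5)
Your argument is correct and is precisely the classical Moser-type commutator proof; the paper itself gives no proof of this lemma but cites Majda--Bertozzi \cite{AA2002}, where essentially your Leibniz--H\"older--Gagliardo--Nirenberg--Young scheme (with $p_j=2(k-1)/(j-1)$, $q_j=2(k-1)/(k-j)$, $\theta_1+\theta_2=1$) is carried out, so you have reproduced the intended argument. The only cosmetic point is that the interpolation you actually use, $\|\nabla^a h\|_{L^{2m/a}}\le C\|h\|_{L^\infty}^{1-a/m}\|\nabla^m h\|_{L^2}^{a/m}$, is the $L^\infty$-endpoint Gagliardo--Nirenberg inequality rather than Lemma \ref{lemma 2.1} as literally stated (which has $L^2$ in both factors on the right); that endpoint inequality is standard and your exponent bookkeeping with it, including the degenerate cases $j=1$, $j=k$ and $k=1$, is exact.
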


Then, we provide the reformulation of the system \eqref{1.2}. Without loss of generality, we set $\bar{u}=1$. By setting $n=u-1$ and $n_0=u_0-1,$ we rewrite \eqref{1.2} in the following perturbation form:
\begin{equation}
\label{yuan}
	\left\{
	\begin{array}{l}
		n_t - \Delta n - {\rm div}{\bf v} = S_1, \\
		{\bf v}_t -  \varepsilon\Delta{\bf v} - \nabla n = S_2,\\
		(n,{\bf v})(x,0)=(n_0,{\bf v}_0)(x),
	\end{array}
	\right.
\end{equation}
where $S_1$ and $S_2$ are defined by
\begin{equation}\label{yuan2}
	\begin{aligned}
		S_1={\rm div} (n{\bf v}) ,\qquad
		S_2= - \varepsilon\nabla | {\bf v}|^2.
	\end{aligned}
\end{equation}


By some straightforward calculations, one can derive the following a priori estimates, which will play an important role in establishing the time decay rates of solutions:
\begin{equation}
\label{S}
\begin{aligned}
	\|(S_1,S_2)\|_{L^1(\mathbb{R}^d)} &\leq \|(n, {\bf v})\|_{L^2(\mathbb{R}^d)} \|(\nabla n, \nabla {\bf v})\|_{L^2(\mathbb{R}^d)}
	                                  \leq\kappa \|(\nabla n, \nabla {\bf v})\|_{L^2(\mathbb{R}^d)},\\
	\|(S_1,S_2)\|_{L^2(\mathbb{R}^d)} &\leq (\| n\|_{H^1(\mathbb{R}^d)}^\frac{1}{2}+\|{\bf v}\|_{H^1(\mathbb{R}^d)}^\frac{1}{2})(\|\nabla^2 n\|_{L^2(\mathbb{R}^d)}^\frac{1}{2}+\|\nabla^2 {\bf v}\|_{L^2(\mathbb{R}^d)}^\frac{1}{2})\|(\nabla n, \nabla{\bf v})\|_{L^2(\mathbb{R}^d)}\\
	                                  &\leq\kappa^\frac{1}{2} M_0^\frac{1}{2} \|(\nabla n, \nabla{\bf v})\|_{L^2(\mathbb{R}^d)},\\
	\|\nabla(S_1,S_2)\|_{L^2(\mathbb{R}^d)} &\leq (\|\nabla n\|_{L^2(\mathbb{R}^d)}^\frac{1}{2}\|\nabla^2 n\|_{L^2(\mathbb{R}^d)}^\frac{1}{2} + \|\nabla {\bf v}\|_{L^2(\mathbb{R}^d)}^\frac{1}{2}\|\nabla^2 {\bf v}\|_{L^2(\mathbb{R}^d)}^\frac{1}{2})\|(\nabla^2n, \nabla^2{\bf v})\|_{L^2(\mathbb{R}^d)}\\
                         	                &\leq\kappa^\frac{1}{2} M_0^\frac{1}{2}\|(\nabla^2n, \nabla^2{\bf v})\|_{L^2(\mathbb{R}^d)}.
\end{aligned}
\end{equation}

\bigskip

\section{Spectral Analysis and Linear $L^2$ Estimates}

In this section, we apply spectral analysis to the semigroup of the linearized system \eqref{yuan} and then prove time-decay estimates for it.

\subsection{Spectral analysis of the semigroup}

Let us consider the following initial value problem for the linearized system of \eqref{yuan}.
\begin{equation}\label{3.2}
	\left\{
	\begin{aligned}
		&\widetilde{n}_t - \Delta \widetilde{n} - {\rm div} \widetilde{{\bf v}} =0,   \qquad  && x\in \mathbb{R} ^d,\ t > 0,\\
		&\widetilde{{\bf v}}_t -  \varepsilon\Delta \widetilde{{\bf v}} - \nabla \widetilde{n} = 0,  \qquad  && x\in \mathbb{R} ^d,\ t > 0,\\
		&(\widetilde{n},\widetilde{{\bf v}})(x,0)=(\widetilde{n}_0,\widetilde{{\bf v}}_0)(x),                 \qquad  && x\in \mathbb{R} ^d.
	\end{aligned}
	\right.
\end{equation}
Utilizing semigroup theory, we can represent the solution $(\widetilde{n},\widetilde{{\bf v}})$ of the linearized equation \eqref{3.2} in form of
 $\widetilde{U}=(\widetilde{n},\widetilde{{\bf v}})^t$. In fact, the system \eqref{3.2} can be rewritten as
$$
\widetilde{U}_t = B\widetilde{U}, \qquad t \geq 0, \qquad \widetilde{U}(0) = \widetilde{U}_0,
$$
which gives rise to
$$
\widetilde{U}(t) = S(t)\widetilde{U}(0) = e^ {tB}\widetilde{U}(0), \qquad t \geq 0,
$$
where
$$
B = \left(  {\begin{array}{cc}
		\Delta & \rm div\\
		\nabla &  \varepsilon\Delta
\end{array}}\right).
$$
Now, our goal is to analyze the differential operator $B$ through its Fourier expression $A(\xi)$ and show the long-time behavior of the semigroup $S(t)$. Applying a Fourier transform to system \eqref{3.2}, we obtain
\begin{align*}
\partial_t \widehat{\widetilde{U}}(t,\xi) &= A(\xi)\widehat{\widetilde{U}}(t,\xi), \  t\geq 0,
\\
\widehat{\widetilde{U}}(\xi,0)&=\widehat{\widetilde{U}}_0(\xi),
\end{align*}
where $\xi = (\xi_1,\xi_2)^t$ in 2D and $\xi = (\xi_1,\xi_2,\xi_3)^t$ in 3D,  and $A(\xi)$ is defined as
$$ A(\xi) = \left(  {\begin{array}{cc}
		-|\xi|^2 & i\xi^t\\
		i\xi & -\varepsilon|\xi|^2I_{d\times d}
\end{array}}\right).
$$
Since
$${\rm det}(A(\xi)-\lambda I) = (\lambda +  \varepsilon|\xi|^2)^2(\lambda ^2 + (\varepsilon +1)|\xi|^2\lambda +  \varepsilon|\xi|^4 + |\xi|^2)=0,
$$
we can get the eigenvalues
$$
\lambda_0 = -\varepsilon|\xi|^2~ {\rm (double)},  \qquad \lambda_+=\lambda_+(|\xi|), \qquad \lambda_-=\lambda_-(|\xi|).
$$
Therefore, the semigroup $e^{At}$ can be shown as
 $$
 e^{At} = e^{\lambda_0 t}P_0 +  e^{\lambda_+ t}P_ + +  e^{\lambda_- t}P_-,
 $$
where the projector $P_0$, $P_+$ and $P_-$ can be expressed as
 $$
 P_i = \prod _{i\neq j} \frac{A(\xi)-\lambda_j I}{\lambda_i -\lambda_j}, \qquad i,j=0,+,-.
 $$
By some straightforward calculations, one can establish the precise formulation for the Fourier-transformed Green's function $\widehat{G}(t,\xi)$ corresponding to $G(t,x)=e^{tB}$ as
\begin{align}
\begin{aligned}
& \widehat{G}(t,\xi) = e^{tA} \\=& \left(  {\begin{array}{cc}
		\frac{\lambda_+ e^{\lambda_- t} - \lambda_- e^{\lambda_+ t}}{\lambda_+ - \lambda_-} - \frac{ e^{\lambda_+t} - e^{\lambda-t}}{\lambda_+ - \lambda_-}|\xi|^2  & \frac{i \xi^t (e^{\lambda_+t} - e^{\lambda_-t})}{\lambda_+ - \lambda_-}\\
		\frac{i \xi (e^{\lambda_+t} - e^{\lambda_-t})}{\lambda_+ - \lambda_-} & e^{\lambda_0 t}(I- \frac{\xi \xi^t}{|\xi|^2})
		+ \frac{\xi \xi^t}{|\xi|^2}(\frac{\lambda_+ e^{\lambda_- t} - \lambda_- e^{\lambda_+ t}}{\lambda_+t - \lambda_-t}
		- \varepsilon|\xi|^2\frac{e^{\lambda_+t} - e^{\lambda_-t}}{\lambda_+ - \lambda_-})
\end{array}}\right).
\end{aligned}
\end{align}
Denote $$\widehat{G}(t,\xi) =\left(  {\begin{array}{cc} \widehat{N} \\ \widehat{M} \end{array}}\right)$$ and divide the convolution $(\widetilde{n},\widetilde{{\bf v}} )= G \ast \widetilde{U}_0$  into the frequency components
$$
\widehat{\widetilde{n}} = \widehat{N} \cdot \widehat{\widetilde{U}}_0,
\quad\text{and}\quad
\widehat{\widetilde{{\bf v}} }= \widehat{M} \cdot \widehat{\widetilde{U}}_0.
$$
In order to obtain the decay rate of the solution, we need to estimate lower frequency parts and high frequency parts for
the Fourier transform $\widehat{G}(t,\xi)$ of the Green  function.
Based on the eigenvalue definitions, for $\xi \leq \eta$, we can deduce
\begin{equation}\label{3.3}
	\begin{aligned}
		\lambda_+ = -\frac{(1 +\varepsilon)|\xi|^2}{2} + \frac{i}{2}\sqrt{4(|\xi|^2+|\xi|^4) - (1+ \varepsilon)^2|\xi|^4} = a+bi,\\
		\lambda_- = -\frac{(1 +\varepsilon)|\xi|^2}{2} - \frac{i}{2}\sqrt{4(|\xi|^2+|\xi|^4) - (1+\varepsilon)^2|\xi|^4} = a-bi,
	\end{aligned}
\end{equation}
and then,
\begin{equation}
\label{3.4}
	\begin{aligned}
		&\frac{\lambda_+ e^{\lambda_- t} - \lambda_- e^{\lambda_+ t}}{\lambda_+ - \lambda_-} = e^{-\frac{1}{2}(1 +\varepsilon)|\xi|^2t}[\cos(bt) + \frac{1}{2}(1 +\varepsilon)|\xi|^2 \frac{\sin(bt)}{b}],\\			
		&\frac{ e^{\lambda_+t} - e^{\lambda_-t}}{\lambda_+ - \lambda_-} = e^{-\frac{1}{2}(1 +\varepsilon)|\xi|^2t}\frac{\sin(bt)}{b}.				\end{aligned}
\end{equation}
Here we approximate $b$ via Taylor expansion around low frequency parts as
$$
b=\frac{1}{2}\sqrt{4(|\xi|^2 + |\xi|^4) - (1+\varepsilon)^2|\xi|^4} \sim |\xi| + O(|\xi|^3), \quad |\xi| \leq \eta.
$$
Similarly, for high frequency parts for $\xi \geq \eta$, we  get
\begin{equation}
	\begin{aligned}
		\lambda_+ = -\frac{(1 +\varepsilon)|\xi|^2}{2} -\frac{1}{2}\sqrt{(1+\varepsilon)^2|\xi|^4 - 4(|\xi|^2 + |\xi|^4)  } = a-b,\\
		\lambda_- = -\frac{(1 +\varepsilon)|\xi|^2}{2} + \frac{1}{2}\sqrt{(1+\varepsilon)^2|\xi|^4 - 4(|\xi|^2 + |\xi|^4) } = a+b,
	\end{aligned}
\end{equation}
which leads
\begin{equation*}
	\begin{aligned}
		&\frac{\lambda_+ e^{\lambda_- t} - \lambda_- e^{\lambda_+ t}}{\lambda_+ - \lambda_-} = \frac{1}{2}e^{(a+b)t}[1+e^{-2bt}]- \frac{a}{2b}e^{(a+b)t}[1-e^{-2bt}] \sim O(1)e^{-R_0t}, \quad \xi \geq \eta,\\			
		&\frac{ e^{\lambda_+t} - e^{\lambda_-t}}{\lambda_+ - \lambda_-} = \frac{1}{2b}e^{(a+b)t}[1-e^{-2bt}] \sim O(1)\frac{1}{|\xi|^2}e^{-R_0t}, \quad \xi \geq \eta,			
	\end{aligned}
\end{equation*}
where $b$ can be approximated as
 $$
 b=\frac{1}{2}\sqrt{(1+\varepsilon)^2|\xi|^4 - 4(|\xi|^2 + |\xi|^4)  } \sim \frac{\varepsilon+1}{2}|\xi|^2 - \frac{2}{\varepsilon+1} + O(|\xi|^{-2}), \quad \xi \geq \eta,
 $$
and $R_0$ and $\eta$ are some positive constants.

\subsection{Linear $L^2$ estimates for system \eqref{3.2}}

By virtue of the Green's function in frequency space and the asymptotic analysis above, we can obtain the following result concerning the long-time properties of the global solutions of the linearized system in $L^2$ norm.
\begin{lemma}
\label{U}
Assume $U_0 = (n_0,{\bf v}_0) \in L^1(\mathbb R^d) \cap H^l(\mathbb R^d) (l \geq 2)$.
The global classical solution  $(\tilde{n},\tilde{{\bf v}}) \in C([0,\infty);H^N(\mathbb R^d))$ to the linearized system \eqref{3.2} satisfies
$$
\|\nabla ^k (\widetilde{n},\widetilde{{\bf v}})(t)\|_{L^2(\mathbb R^d)} \leq C(1+t)^{-\frac{d}{4}-\frac{k}{2}}(\|U_0\|_{L^1(\mathbb R^d)} + \|\nabla^kU_0 \|_{L^2(\mathbb R^d)}),
$$
with $0\leq k \leq N$. Here the positive constant $C$ is independent of $t$.
\end{lemma}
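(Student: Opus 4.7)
The plan is to work in Fourier space via Plancherel's theorem, using the explicit representation $\widehat{G}(t,\xi)=e^{tA(\xi)}$ derived in Section 3.1 and splitting the frequency domain into the low-frequency region $\{|\xi|\le\eta\}$ and the high-frequency region $\{|\xi|\ge\eta\}$ determined by the spectral asymptotics. Writing
$$
\|\nabla^k(\widetilde{n},\widetilde{{\bf v}})(t)\|_{L^2}^2=\int_{\mathbb{R}^d}|\xi|^{2k}\big|\widehat{G}(t,\xi)\widehat{U}_0(\xi)\big|^2d\xi,
$$
I would bound each entry of $\widehat{G}(t,\xi)$ separately on the two regions, then combine.

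For the low-frequency piece, formulas \eqref{3.3}--\eqref{3.4} together with the Taylor expansion $b\sim|\xi|$ show that every entry of $\widehat{G}(t,\xi)$ is dominated by $C e^{-c|\xi|^2 t}$ for some $c>0$ (the factors $\sin(bt)/b$, $\cos(bt)$, and the projector $\xi\xi^t/|\xi|^2$ all remain bounded or contribute at most a harmless $|\xi|^{a}$ factor with $a\ge 0$ after multiplying by the prefactors $i\xi$, $|\xi|^2$, etc.). Combined with the Hausdorff--Young bound $|\widehat{U}_0(\xi)|\le\|U_0\|_{L^1}$, this yields
$$
\int_{|\xi|\le\eta}|\xi|^{2k}|\widehat{G}\widehat{U}_0|^2d\xi\le C\|U_0\|_{L^1}^2\int_{\mathbb{R}^d}|\xi|^{2k}e^{-2c|\xi|^2 t}d\xi\le C(1+t)^{-\frac{d}{2}-k}\|U_0\|_{L^1}^2,
$$
by the standard scaling $\int|\xi|^{2k}e^{-c|\xi|^2t}d\xi\le C t^{-d/2-k}$ for $t\ge 1$, with the trivial $O(1)$ bound handling the range $t\in[0,1]$.

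For the high-frequency piece, the spectral computation shows that $\lambda_0=-\varepsilon|\xi|^2$ and $\mathrm{Re}\,\lambda_\pm\le -R_0$ for $|\xi|\ge\eta$, so every entry of $\widehat{G}(t,\xi)$ decays like $e^{-R_0 t}$ (the factor $1/|\xi|^2$ appearing in one entry only helps). Applying Plancherel on the initial datum,
$$
\int_{|\xi|\ge\eta}|\xi|^{2k}|\widehat{G}\widehat{U}_0|^2d\xi\le Ce^{-2R_0 t}\int_{|\xi|\ge\eta}|\xi|^{2k}|\widehat{U}_0(\xi)|^2d\xi\le Ce^{-2R_0 t}\|\nabla^k U_0\|_{L^2}^2,
$$
which is majorized by $C(1+t)^{-\frac{d}{2}-k}\|\nabla^k U_0\|_{L^2}^2$. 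Adding the two contributions and taking square roots gives the stated bound.

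The main obstacle I anticipate is the bookkeeping required to verify that all entries of $\widehat{G}(t,\xi)$ are genuinely dominated by $e^{-c|\xi|^2 t}$ (up to polynomial factors in $|\xi|$ that combine safely with the $|\xi|^{2k}$ weight) on the low-frequency region; in particular one must check that the apparent singularity $1/b$ at the origin is always cancelled by the companion factor $\sin(bt)$ or by an adjacent $i\xi$, and that the projection $\xi\xi^t/|\xi|^2$ does not introduce a genuine discontinuity since it is multiplied by differences of exponentials that vanish appropriately as $|\xi|\to 0$. Once this is confirmed the rest is a routine $L^1$--$L^2$ heat-type argument.
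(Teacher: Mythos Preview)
Your proposal is correct and follows essentially the same approach as the paper: Plancherel, split into low and high frequencies at $|\xi|=\eta$, bound $\widehat{G}(t,\xi)$ by $Ce^{-c|\xi|^2 t}$ on the low part and by $Ce^{-R_0 t}$ on the high part using the spectral asymptotics of Section~3.1, then feed in $\|\widehat{U}_0\|_{L^\infty}\le\|U_0\|_{L^1}$ on the low piece and Plancherel on the high piece. Your anticipated bookkeeping concerns (cancellation of $1/b$ by $\sin(bt)$ or by the accompanying $i\xi$, and boundedness of the projector $\xi\xi^t/|\xi|^2$) are exactly what the paper handles when it writes the $O(1)e^{-\frac12(1+\varepsilon)|\xi|^2 t}$ and $O(1)e^{-\varepsilon|\xi|^2 t}$ estimates for $\widehat{\widetilde{n}}$ and $\widehat{\widetilde{{\bf v}}}$, so there is no genuine gap.
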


\begin{proof} 
By a direct computation, combined with the expression of the Green's function $\widehat{G}(t,\xi)$, one obtains
\begin{align*}
	\widehat{\widetilde{n}}(t,\xi)
	=& \frac{\lambda_+ e^{\lambda_- t} - \lambda_- e^{\lambda_+ t}}{\lambda_+ - \lambda_-}\widehat{n}_0 - \frac{ e^{\lambda_+t} - e^{\lambda-t}}{\lambda_+ - \lambda_-}|\xi|^2\widehat{n}_0 + \frac{i \xi^t (e^{\lambda_+t} - e^{\lambda_-t})}{\lambda_+ - \lambda_-}\widehat{{\bf v}}_0\\
	\begin{split}
		&\sim\left\{
		\begin{array}{ll}
			O(1)e^{-\frac{1}{2}(1+\varepsilon)|\xi|^2t}(|\widehat{n}_0|+|\hat{{\bf v}}_0|), \quad |\xi|\leq \eta,\\
			O(1)e^{-R_0t}(|\widehat{n}_0|+|\widehat{{\bf v} }_0|), \quad |\xi|\geq \eta,\\
		\end{array}
		\right.
	\end{split}	
\end{align*}
\begin{align*}
	&\widehat{\widetilde{\bf v}}(t,\xi)\\
	=&\frac{i \xi (e^{\lambda_+t} - e^{\lambda_-t})}{\lambda_+ - \lambda_-} \widehat{n}_0 + e^{\lambda_0 t}(I- \frac{\xi \xi^t}{|\xi|^2})\widehat{\bf v}_0
	+ \frac{\xi \xi^t}{|\xi|^2}\bigg(\frac{\lambda_+ e^{\lambda_- t} - \lambda_- e^{\lambda_+ t}}{\lambda_+t - \lambda_-t}\widehat {\bf v}_0
	-\varepsilon |\xi|^2\frac{e^{\lambda_+t} - e^{\lambda_-t}}{\lambda_+ - \lambda_-}\widehat{\bf v}_0\bigg)\\
	\begin{split}
		&\sim\left\{
		\begin{array}{ll}
			O(1)e^{-\varepsilon|\xi|^2t}(|\widehat{n}_0|+|\widehat{\bf v}_0|), \quad |\xi|\leq \eta,\\
			O(1)e^{-R_0t}(|\widehat{n}_0|+|\widehat{\bf v}_0|), \quad |\xi|\geq \eta.\\
		\end{array}
		\right.
	\end{split}	
\end{align*}
Here $R_0$ and $\eta$ are some constants. Then we establish the  $L^2$-decay rate for $(\widetilde{n},\widetilde{{\bf v}})$ as
\begin{equation*}
	\begin{aligned}
		&\|(\widehat{\widetilde{n}},\widehat{\widetilde{{\bf v}}})(t)\|_{L^2(\mathbb R^d)}^2\\
		=&\int_{|\xi|\leq\eta}|(\widehat{\widetilde{n}},\widehat{\widetilde{{\bf v}}})(t,\xi)|^2 d\xi + \int_{|\xi|\geq\eta}|(\widehat{\widetilde{n}},\widehat{\widetilde{{\bf v}}})(t,\xi)|^2 d\xi\\
		\leq &C\int_{|\xi|\leq\eta}e^{-(1+\varepsilon)|\xi|^2t}(|\widehat{n}_0|^2+|\widehat{{\bf v}}_0|^2)d\xi + \int_{|\xi|\geq\eta} e^{-R_0t}(|\widehat{n}_0|^2+|\widehat{{\bf v}}_0|^2)d\xi\\
		\leq &C(1+t)^{-\frac{d}{2}}\|(n_0,{\bf v}_0)\|_{L^1(\mathbb{R}^d) \bigcap L^2(\mathbb{R}^d)}^2.
	\end{aligned}
\end{equation*}
Furthermore, the $L^2$-decay rate for the derivatives of $(\nabla^k\widetilde{n},\nabla^k\widetilde{\bf v})$ is deduced by
\begin{equation*}
	\begin{aligned}
		&\|(\widehat{\nabla^k\widetilde{n}},\widehat{\nabla^k\widetilde{{\bf v}}})(t)\|_{L^2(\mathbb R^d)}^2\\
		=&\int_{|\xi|\leq\eta}|\xi|^{2k}|(\widehat{\widetilde{n}},\widehat{\widetilde{{\bf v}}})(t,\xi)|^2 d\xi + \int_{|\xi|\geq\eta}|\xi|^{2k}|(\widehat{\widetilde{n}},\widehat{\widetilde{{\bf v}}})(t,\xi)|^2 d\xi\\
		\leq &C\int_{|\xi|\leq\eta}e^{-(1+\varepsilon)|\xi|^2t}|\xi|^{2k}(|\widehat{n}_0|^2+|\widehat{{\bf v}}_0|^2)d\xi + \int_{|\xi|\geq\eta} e^{-R_0t}|\xi|^{2k}(|\widehat{n}_0|^2+|\widehat{{\bf v}}_0|^2)d\xi\\
		\leq &C(1+t)^{-\frac{d}{2}-k}(\|(n_0,{\bf v}_0)\|_{L^1(\mathbb{R}^d)}^2 + \|(\nabla^kn_0,\nabla^k {\bf v}_0)\|_{ L^2(\mathbb{R}^d)}^2 ).
	\end{aligned}
\end{equation*}
The proof of this lemma is completed.
\end{proof} 

\begin{lemma}
\label{linear}
Under the assumptions of Theorem \ref{th1},  there is a unique global classical solution  $(\tilde{n},\tilde{{\bf v}}) \in C([0,\infty);H^N(\mathbb R^d))$ to the linearized system \eqref{3.2}, satisfies the inequality:
\begin{equation}
\label{nv}
\tilde{c}(1+t)^{-\frac{d}{4}-\frac{k}{2}} \leq\|\nabla ^k (\widetilde{n},\widetilde{{\bf v}})\|_{L^2(\mathbb R^d)} \leq C(1+t)^{-\frac{d}{4}-\frac{k}{2}},
\end{equation}
where $0\leq k \leq N$, $\tilde{c}$ and $C$ are positive constants  independent of time $t$.
\end{lemma}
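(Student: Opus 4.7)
The upper bound in \eqref{nv} is an immediate restatement of Lemma \ref{U}, so I concentrate on the lower bound. The plan is to combine Plancherel's identity with a pointwise lower bound on $|\widehat{\widetilde{U}}(t,\xi)|$ valid in a small ball around the origin, followed by a scaling argument in the spirit of Fourier splitting.

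First, I would exploit the hypotheses on the initial data. Because $U_0\in L^1(\mathbb{R}^d)$, its Fourier transform $\widehat{U_0}$ is continuous, and assumption \eqref{-1} says $\widehat{U_0}(0)\neq 0$. Hence there exist constants $\eta_0,c_0>0$ such that $|\widehat{U_0}(\xi)|\geq c_0$ for $|\xi|\leq\eta_0$. This is the low-frequency mechanism noted in Remark \ref{remark 1.5}.

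Second, rather than attempting to exploit the oscillatory identities \eqref{3.4}, the idea is to read off the Hermitian part of $A(\xi)$. Since $\xi\in\mathbb{R}^d$ is real, one computes
$$A(\xi)+A(\xi)^{*}=-2\,\mathrm{diag}\bigl(|\xi|^2,\ \varepsilon|\xi|^2 I_{d\times d}\bigr),$$
so that for every $w\in\mathbb{C}^{d+1}$,
$$\langle(A(\xi)+A(\xi)^{*})w,w\rangle\geq -C|\xi|^2|w|^2,\qquad C:=2\max(1,\varepsilon).$$
Applied with $w(t)=e^{tA(\xi)}\widehat{U_0}(\xi)$ to the identity $\tfrac{d}{dt}|w|^2=\langle (A+A^{*})w,w\rangle$ and Gronwall, this yields the pointwise bound
$$|\widehat{\widetilde{U}}(t,\xi)|^2\geq e^{-C|\xi|^2 t}|\widehat{U_0}(\xi)|^2.$$
Plancherel together with the two estimates above then gives
$$\|\nabla^k\widetilde{U}(t)\|_{L^2}^2\geq c_0^2\int_{|\xi|\leq\eta_0}|\xi|^{2k}e^{-C|\xi|^2t}\,d\xi.$$
Rescaling $\eta=\sqrt{Ct}\,\xi$ converts the right-hand side into $c_0^2(Ct)^{-(d/2+k)}\int_{|\eta|\leq\eta_0\sqrt{Ct}}|\eta|^{2k}e^{-|\eta|^2}\,d\eta$; for $t\geq 1$ the inner integral is bounded below by a positive constant, yielding $\tilde c\,t^{-(d/2+k)}$, while for $0\leq t\leq 1$ the pointwise inequality with $e^{-C|\xi|^2 t}\geq e^{-C\eta_0^2}$ on the ball produces a uniform positive lower bound. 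Together these assemble into the claimed $\tilde c(1+t)^{-(d/4+k/2)}$ after taking square roots.

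The principal obstacle is precisely the pointwise lower bound. The semigroup $e^{tA(\xi)}$ is not normal, its eigenvalues are complex, and the oscillating factors $\cos(bt),\sin(bt)/b$ in \eqref{3.4} do cause cancellations at isolated $(t,\xi)$, so a naive attempt to lower-bound each entry of $\widehat G$ fails. The Hermitian-part identity sidesteps this entirely because $A(\xi)+A(\xi)^{*}$ is diagonal and nonpositive, so one never needs to disentangle the eigenspaces or analyse when the oscillating terms are small. The resulting constant $C=2\max(1,\varepsilon)$ also stays finite as $\varepsilon\to 0$, which is essential for an estimate that remains uniform across the parabolic-hyperbolic degeneration considered throughout the paper.
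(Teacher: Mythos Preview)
Your proof is correct and takes a genuinely different, more streamlined route than the paper. The paper expands the Green's function explicitly via the eigenvalue formulas \eqref{3.3}--\eqref{3.4}, splits $\widehat{\widetilde{n}}=T_1+T_2$ and $\widehat{\widetilde{\bf v}}=D_1+D_2$ into leading oscillatory and remainder pieces, shows the remainders decay at the faster rate $(1+t)^{-(d/2+1)}$, and then handles the leading parts by a three-case analysis on whether $M_n=\int n_0\,dx$ and $M_v=\int{\bf v}_0\,dx$ vanish, estimating integrals of $e^{-(1+\varepsilon)|\xi|^2t}\cos^2(bt)$ and $e^{-(1+\varepsilon)|\xi|^2t}\sin^2(bt)$ from below by dyadic summation over periods. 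Your Hermitian-part identity $A(\xi)+A(\xi)^*=-2\,\mathrm{diag}(|\xi|^2,\varepsilon|\xi|^2 I)$ sidesteps all of this: it delivers the pointwise lower bound $|e^{tA(\xi)}\widehat{U_0}(\xi)|^2\geq e^{-2\max(1,\varepsilon)|\xi|^2t}|\widehat{U_0}(\xi)|^2$ directly, without ever resolving eigenvalues or tracking the oscillations, and the rest is a routine scaling. One trade-off is worth noting: the paper's computation actually yields separate lower bounds for $\|\nabla^k\widetilde{n}\|_{L^2}$ and $\|\nabla^k\widetilde{\bf v}\|_{L^2}$ individually (see \eqref{n} and \eqref{v}), and it is these componentwise bounds that are invoked later in \eqref{01}; your argument bounds only the full vector $|\widehat{\widetilde{U}}|$, which suffices for the lemma as stated but would require a small additional step if the components are needed separately downstream.
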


\begin{proof}
For simplicity, we first present the case $k=0$. The decay rates for higher-order derivatives can be obtained similarly. By the formula of Green's function $\widehat{G}(t,\xi)$, we deduce that
\begin{equation*}
	\begin{aligned}
		\widehat{\widetilde{n}}(t,\xi)
		=& \frac{\lambda_+ e^{\lambda_- t} - \lambda_- e^{\lambda_+ t}}{\lambda_+ - \lambda_-}\widehat{n}_0 - \frac{ e^{\lambda_+t} - e^{\lambda-t}}{\lambda_+ - \lambda_-}|\xi|^2\widehat{n}_0
		+\frac{i \xi^t (e^{\lambda_+t} - e^{\lambda_-t})}{\lambda_+ - \lambda_-}\widehat{{\bf v}}_0\\
		=&e^{-\frac{1}{2}(\varepsilon +1)|\xi|^2t}[\cos(bt)\widehat{n}_0  + i \xi^t \frac{\sin(bt)}{b}\widehat{{\bf v}}_0]
		-e^{-\frac{1}{2}(\varepsilon+1)|\xi|^2t}\frac{\sin(bt)}{b} |\xi|^2\widehat{n}_0\\
		&+ \frac{1}{2}(\varepsilon+1)e^{-\frac{1}{2}(\varepsilon +1)|\xi|^2t}\frac{\sin(bt)}{b} |\xi|^2\widehat{n}_0\\
		=&e^{-\frac{1}{2}(\varepsilon+1)|\xi|^2t}[\cos(bt)\widehat{n}_0  + i \xi^t \frac{\sin(bt)}{b}\widehat{{\bf v}}_0] 	+ e^{-\frac{1}{2}(1 +\varepsilon)|\xi|^2t}[\frac{1}{2}(\varepsilon-1) |\xi|^2\frac{\sin(bt)}{b}\widehat{n}_0 ]\\
		:=&T_1 + T_2,  \qquad {\text{for}} \quad |\xi|\leq \eta,
	\end{aligned}
\end{equation*}
\begin{equation*}
	\begin{aligned}
		&\widehat{\widetilde{{\bf v}}}(t,\xi)\\
		=&\frac{i \xi (e^{\lambda_+t} - e^{\lambda_-t})}{\lambda_+ - \lambda_-} \widehat{n}_0 + e^{\lambda_0 t}(I- \frac{\xi \xi^t}{|\xi|^2})\widehat{{\bf v}}_0
		+ \frac{\xi \xi^t}{|\xi|^2}(\frac{\lambda_+ e^{\lambda_- t} - \lambda_- e^{\lambda_+ t}}{\lambda_+t - \lambda_-t}\widehat{{\bf v}}_0
		- |\xi|^2\frac{e^{\lambda_+t} - e^{\lambda_-t}}{\lambda_+ - \lambda_-}\widehat{{\bf v}}_0)\\
		=&\frac{i \xi (e^{\lambda_+t} - e^{\lambda_-t})}{\lambda_+ - \lambda_-} \widehat{n}_0 + e^{\lambda_0 t}(I- \frac{\xi \xi^t}{|\xi|^2})\widehat{{\bf v}}_0
		+ \frac{\lambda_+ e^{\lambda_- t} - \lambda_- e^{\lambda_+ t}}{\lambda_+t - \lambda_-t} \frac{\xi (\xi \cdot \widehat{{\bf v}}_0)}{|\xi|^2}\\
		&- |\xi|^2 \frac{e^{\lambda_+t} - e^{\lambda_-t}}{\lambda_+ - \lambda_-}\frac{\xi (\xi \cdot \widehat{{\bf v}}_0)}{|\xi|^2}\\
		=&e^{-\frac{1}{2}(1 +\varepsilon)|\xi|^2t}\frac{\sin(bt)}{b}i \xi\widehat{n}_0 + e^{\lambda_0 t}(I- \frac{\xi \xi^t}{|\xi|^2})\widehat{{\bf v}}_0
		+ e^{-\frac{1}{2}(1 +\varepsilon)|\xi|^2t}[\cos(bt) \\
		&\quad+  \frac{1}{2}(1 +\varepsilon)|\xi|^2 \frac{\sin(bt)}{b}]\frac{\xi (\xi \cdot \widehat{{\bf v}}_0)}{|\xi|^2}
		 -   \varepsilon|\xi|^2 e^{-\frac{1}{2}(\varepsilon +1)|\xi|^2t} \frac{\sin(bt)}{b}\frac{\xi (\xi \cdot \widehat{{\bf v}}_0)}{|\xi|^2}\\
		=& {e^{-\frac{1}{2}(1 +\varepsilon)|\xi|^2t}[\cos(bt)\frac{\xi (\xi \cdot \widehat{{\bf v}}_0)}{|\xi|^2} + \frac{\sin(bt)}{b}i \xi\widehat{n}_0]
			+ e^{\lambda_0 t}(I- \frac{\xi \xi^t}{|\xi|^2})\widehat{{\bf v}}_0 }
		\\
&+ e^{-\frac{1}{2}(1 +\varepsilon)|\xi|^2t}[\frac{1}{2}(1-\varepsilon) |\xi|^2\frac{\sin(bt)}{b} \frac{\xi (\xi \cdot \widehat{{\bf v}}_0)}{|\xi|^2}] \\
		:=& D_1 +  D_2, \qquad {\text{for}} \quad |\xi|\leq \eta,
	\end{aligned}
\end{equation*}
where  $\eta$ is a sufficiently small constant. Then, the following can be derived,
\begin{equation}
\label{3.5}
	\begin{aligned}
		\|\widehat{\widetilde{n}}(t,\xi)\|_{L^2(\mathbb R^d)}^2 &= \int_{|\xi|\leq \eta}|\widehat{\widetilde{n}}(t,\xi)|^2d\xi + \int_{|\xi|\geq \eta}|\widehat{\widetilde{n}}(t,\xi)|^2d\xi\\
		&\geq \int_{|\xi|\leq \eta}|T_1 + T_2|^2d\xi \geq  \int_{|\xi|\leq \eta}\frac{1}{2}|T_1|^2 -|T_2|^2d\xi .
	\end{aligned}
\end{equation}
Therefore, one deduces that
\begin{equation}\label{3.6}
	\begin{aligned}
		\int_{|\xi|\leq \eta}|T_2|^2 d\xi &\leq C\|\hat{n}_0\|_{L^\infty(\mathbb{R}^d)}^2 \int_{|\xi|\leq \eta} e^{-(1 +\varepsilon)|\xi|^2t}|\xi|^4\big(\frac{\sin(bt)}{b}\big)^2d\xi\\
		&\leq C\|\hat{n}_0\|_{L^\infty(\mathbb{R}^d)}^2 \int_{|\xi|\leq \eta} e^{-(1 +\varepsilon)|\xi|^2t}|\xi|^2d\xi\\
		&\leq C(1+t)^{-(\frac{d}{2}+1)}\|n_0\|_{L^1(\mathbb{R}^d)}^2.
	\end{aligned}
\end{equation}
Since  $n_0(x) \in L^1$,  it implies $\widehat{n}_0(\xi)\in C(\mathbb{R}^d)$.  And if $\widehat{n}_0(0):=M_n = \int  n_0(x) dx \neq 0$,
it follows that, for a sufficiently small $\eta$, $\widehat{n}_0(\xi) \neq 0$ for $|\xi| \leq \eta$.
Therefore, when $M_n \neq 0,$ it is observed that
\begin{equation*}
	|\widehat{n}_0(\xi)|^2 \geq \frac{1}{C}|\int  n_0(x) dx |^2 \geq \frac{M_n^2}{C}, \quad {\text{for}} \quad |\xi| \leq \eta.
\end{equation*}
For the initial velocity field $\widehat{{\bf v}}_0$, a similar argument indicates that when $M_v=\int  {\bf v}_0(x) dx $ is non-zero, the following inequality holds
\\
$$
\frac{|\xi \cdot \widehat{{\bf v}}_0(\xi)|^2}{|\xi|^2} \geq \frac{|\xi \cdot M_v|^2}{C|\xi|^2}, \quad {\text{for}}\ |\xi| \leq \eta.
$$
\\
{\bf Case 1: } When $M_n \neq 0, M_v \neq 0$, employing the previous estimates and recognizing that $b \sim |\xi| + O(|\xi|^3)$ for $|\xi| \leq \eta,$ we have
\begin{equation}
\label{3.7}
	\begin{aligned}
		\int_{|\xi|\leq \eta}|T_1|^2 d\xi &\geq  \frac{M_n^2}{C} \int_{|\xi|\leq \eta} e^{-(1 +\varepsilon)|\xi|^2t}\cos^2(bt)d\xi
		+ \frac{1}{C}\int_{|\xi|\leq \eta} \frac{|\xi \cdot M_v|^2}{b^2} e^{-(\varepsilon +1)|\xi|^2t}\sin^2(bt)d\xi\\
		&\geq \frac{\min\{M_n^2,M_v^2\}}{C}\int_{|\xi|\leq \eta} e^{-(1 +\varepsilon)|\xi|^2t}(\cos^2(bt) + \sin^2(bt))d\xi\\
		&\geq C_1\int_{|\xi|\leq \eta} e^{-(1 +\varepsilon)|\xi|^2t}d\xi\\
		&\geq \tilde{c}(1+t)^{-\frac{d}{2}}.
	\end{aligned}
\end{equation}
{\bf Case 2: } When $M_n \neq 0, M_v = 0,$ and noticing the continuity of $\widehat{{\bf v}}_0$ near $\xi =0,$ it is assumed that there exists a sufficiently small constant $\epsilon$ such that $\epsilon \rightarrow 0$ as
$\xi \rightarrow 0.$ Under these conditions, we have
\begin{equation*}
	|\widehat{m}_0(\xi)|^2 < \epsilon, \quad {\text{for}} \quad |\xi| \leq \eta.
\end{equation*}
Letting $r = |\xi|\sqrt{t}$, one can show that
\begin{align}\label{3.8}
	\begin{aligned}
		\int_{|\xi|\leq \eta}|T_1|^2 d\xi &\geq  \frac{M_n^2}{C} \int_{|\xi|\leq \eta} e^{-(1 +\varepsilon)|\xi|^2t}\cos^2(bt)d\xi
		-\frac{\epsilon}{C}\int_{|\xi|\leq \eta}  e^{-(1 +\varepsilon)|\xi|^2t}\sin^2(bt)d\xi\\
		&\geq  \frac{M_n^2}{C}t^{-\frac{d}{2}}\int_{0}^{\eta\sqrt{t}}e^{-(1 +\varepsilon)r^2}\cos^2(r\sqrt{t})r^2dr\\
		&\quad -\frac{\epsilon}{C}t^{-\frac{d}{2}}\int_{0}^{\eta\sqrt{t}}e^{-(1 +\varepsilon)r^2}\sin^2(r\sqrt{t})r^2dr\\
		&\geq \frac{M_n^2}{C}t^{-\frac{d}{2}} \sum_{k=0}^{[\frac{\eta t}{\pi}]-1}\int_{\frac{k\pi}{\sqrt{t}}}^{\frac{k \pi+\frac{\pi}{4}}{\sqrt{t}}}
		e^{-(1 +\varepsilon)r^2}\cos^2(r\sqrt{t})r^2dr - \frac{\epsilon}{C}(1+t)^{-\frac{d}{2}}\\
		&\geq \frac{M_n^2}{C}t^{-\frac{d}{2}} \sum_{k=0}^{[\frac{\eta t}{\pi}]-1}\int_{\frac{k\pi}{\sqrt{t}}}^{\frac{k \pi+\frac{\pi}{4}}{\sqrt{t}}}
		e^{-(1 +\varepsilon)r^2}\frac{1+\cos(2r\sqrt{t})}{2}r^2dr - \frac{\epsilon}{C}(1+t)^{-\frac{d}{2}}\\
		&\geq \frac{M_n^2}{2C}t^{-\frac{d}{2}} \sum_{k=0}^{[\frac{\eta t}{\pi}]-1}\int_{\frac{k\pi}{\sqrt{t}}}^{\frac{k \pi+\frac{\pi}{4}}{\sqrt{t}}}
		e^{-(1 +\varepsilon)r^2}r^2dr - \frac{\epsilon}{C}(1+t)^{-\frac{d}{2}}\\
		&\geq C_1^{-1}(1+t)^{-\frac{d}{2}}-C_2^{-1}\epsilon(1+t)^{-\frac{d}{2}}\\
		&\geq \tilde{c}(1+t)^{-\frac{d}{2}}.
	\end{aligned}
\end{align}
{\bf Case 3: }  When $M_n = 0, M_v \neq 0,$ we have
\begin{equation}\label{3.9}
	\begin{aligned}
		\int_{|\xi|\leq \eta}|T_1|^2 d\xi &\geq  -\frac{\epsilon}{C} \int_{|\xi|\leq \eta} e^{-(1 +\varepsilon)|\xi|^2t}\cos^2(bt)d\xi
		+\frac{M_v^2}{C}\int_{|\xi|\leq \eta}  e^{-(1 +\varepsilon)|\xi|^2t}\sin^2(bt)d\xi\\
		&\geq \tilde{c}(1+t)^{-\frac{d}{2}}.
	\end{aligned}
\end{equation}
Combining  \eqref{3.5}, \eqref{3.7}, \eqref{3.8} with \eqref{3.9}, we obtain a lower bound of the time decay rate for $\widetilde{n}(t,x)$ as
\begin{equation}
\label{n}
	\|{\widetilde{n}}(t,x)\|_{L^2(\mathbb R^d)}^2 = \|\widehat{\widetilde{n}}(t,\xi)\|_{L^2(\mathbb R^d)}^2 \geq \tilde{c}(1+t)^{-\frac{d}{2}}.
\end{equation}
Similarly, we can get the lower bound of the time decay rate for $\widetilde{{\bf v}}(t,x)$ as
\begin{equation}
\label{3.10}
\begin{aligned}
\|\widehat{\widetilde{{\bf v}}}(t,\xi)\|_{L^2(\mathbb R^d)}^2 \geq \int_{|\xi|\leq \eta}( \frac{1}{2}|D_1|^2 -|D_2|^2)d\xi,
\end{aligned}
\end{equation}
and we have
\begin{equation}
\label{3.11}
\begin{aligned}
\int_{|\xi|\leq \eta}|D_2|^2 d\xi \leq C(1+t)^{-(\frac{d}{2}+1)}\|{\bf v}_0\|_{L^1(\mathbb{R}^d)}^2.
\end{aligned}
\end{equation}
Taking the definition of $D_1$ into account, we get
\begin{equation*}
\begin{aligned}
		&\int_{|\xi|\leq \eta}|D_1|^2 d\xi \\
		\geq &\Big\{\frac{M_n^2}{C}\int_{|\xi|\leq \eta}  \frac{|\xi|^2}{b^2}e^{-(1 +\varepsilon)|\xi|^2t}\sin^2(bt)d\xi \\
		&\quad + \frac{1}{C}\int_{|\xi|\leq \eta} \frac{|\xi \cdot M_v^2|^2}{|\xi|^2} e^{-(1 +\varepsilon)|\xi|^2t}\cos^2(bt)d\xi\Big\}\\
		&\quad + \int_{|\xi|\leq \eta} e^{-(1 +\varepsilon)|\xi|^2t}\cos(bt) \frac{\xi (\xi \cdot \widehat{{\bf v}}_0)}{|\xi|^2}(\widehat{{\bf v}}_0 - \frac{\xi (\xi \cdot \widehat{{\bf v}}_0)}{|\xi|^2})d\xi \\
		:= &J_1 + J_2.
\end{aligned}
\end{equation*}
Direct computation leads to
\begin{equation}
\label{3.12}
J_1 \geq C^{-1}(1+t)^{-\frac{d}{2}}, \qquad J_2 = 0.
\end{equation}
Combining the estimates \eqref{3.10}, \eqref{3.11} and \eqref{3.12},  we obtain a lower bound for the time decay rate of $\widetilde{{\bf v}}(t,x)$ as
\begin{equation}
\label{v}
	\|{\widetilde{{\bf v}}}(t,x)\|_{L^2(\mathbb{R}^d)}^2 = \|\widehat{\widetilde{{\bf v}}}(t,\xi)\|_{L^2(\mathbb{R}^d)}^2 \geq C^{-1}(1+t)^{-\frac{d}{2}}.
\end{equation}
From \eqref{n} and \eqref{v}, one has that
\begin{equation*}
	\min \{\| \widetilde{n}(t)\|_{L^2(\mathbb{R}^d)}, \| \widetilde{{\bf v}}(t)\|_{L^2(\mathbb{R}^d)}\} \geq \widetilde{c}(1+t)^{-\frac{d}{4}},
\end{equation*}
where $ \widetilde{c}$ is a positive constant depending on $M_n$ and $M_v$. Thanks to this and recalling Lemma \ref{U}, the conclusion of the lemma follows.
\end{proof} 

\bigskip

\section{Optimal Convergence Rates for the Nonlinear System}

In this section, we will establish the optimal convergence rates of the solutions to the initial value problem of the nonlinear system, as stated in Theorem \ref{th} and Theorem \ref{th1}.

\subsection{Upper bounds of the convergence rates}
The upper bounds of the convergence rates will be studied in this subsection. We will establish the upper bounds of the optimal decay rates for $k$-th order spatial derivative of the solution in Sobolev spaces with $k=0,1,\cdots,N-1$.

Let us begin with the following a priori estimate, which has been proven in \cite{WXY2016} based on
the energy method:  
simplicity.
\begin{lemma}[\!\! \cite{WXY2016}]
\label{lemma 3-3-1}
Assume the solution $(n,{\bf v})$ of the system \eqref{yuan} satisfies the  assumptions in
Proposition \ref{prop}. Then for all $t \in [0,T]$, it holds that
\begin{equation}
\label{L3.3}
	\begin{aligned}
		&\frac{d}{dt}(\|\nabla^{k} n\|_{L^2({\mathbb{R}}^d)}^2 +\|\nabla^{k+1} n\|_{L^2({\mathbb{R}}^d)}^2 +\|\nabla^{k} {\bf v}\|_{L^2({\mathbb{R}}^d)}^2+ \|\nabla^{k+1} {\bf v}\|_{L^2({\mathbb{R}}^d)}^2 )\\
		+&C(\|\nabla^{k+1} n\|_{L^2({\mathbb{R}}^d)}^2 + \|\nabla^{k+2} n\|_{L^2({\mathbb{R}}^d)}^2 + \|\nabla^{k}\nabla\cdot {\bf v}\|_{L^2({\mathbb{R}}^d)}^2 + \varepsilon \|\nabla^{k+2} {\bf v}\|_{L^2({\mathbb{R}}^d)}^2 )\leq 0,
	\end{aligned}
\end{equation}
for $k=0,1, \dots , N-1$, where $C$ is a positive constant independent of time $t$.
\end{lemma}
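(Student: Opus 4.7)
My plan is to prove \eqref{L3.3} by combining two standard $L^2$ energy identities (at differentiation orders $k$ and $k+1$) with an auxiliary cross-term identity that supplies the $\varepsilon$-independent dissipation $\|\nabla^k\,\mathrm{div}\,{\bf v}\|_{L^2}^2$. The need for the extra ingredient is structural: direct testing yields parabolic dissipation only from $-\Delta n$ and $-\varepsilon\Delta{\bf v}$, so the $\mathrm{div}\,{\bf v}$ dissipation, which must persist uniformly as $\varepsilon\to 0$, has to be extracted from the hyperbolic linear coupling between $\nabla n$ and ${\bf v}$.

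For the basic identity at level $k$, I apply $\nabla^k$ to both equations of \eqref{yuan}, pair the first with $\nabla^k n$ and the second with $\nabla^k{\bf v}$ in $L^2$, and add. Integration by parts reveals that the two linear coupling terms $\int \nabla^k\,\mathrm{div}\,{\bf v}\,\nabla^k n\,dx$ and $\int \nabla^{k+1}n\cdot\nabla^k{\bf v}\,dx$ cancel exactly, giving an identity whose principal dissipation is $\|\nabla^{k+1}n\|_{L^2}^2+\varepsilon\|\nabla^{k+1}{\bf v}\|_{L^2}^2$, with nonlinear right-hand side coming from $\nabla^k S_1$ and $\nabla^k S_2$. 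Running the same procedure at level $k+1$ produces the analogous identity with $\|\nabla^{k+2}n\|_{L^2}^2$ and $\varepsilon\|\nabla^{k+2}{\bf v}\|_{L^2}^2$ as principal dissipation.

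For the cross-term, I consider the time derivative of $\int \nabla^k n\,\nabla^k\,\mathrm{div}\,{\bf v}\,dx$ (or equivalently $-\int \nabla^{k+1}n\cdot\nabla^k{\bf v}\,dx$ after integration by parts) and substitute the two PDEs. The key contribution arising from $\nabla\cdot(\nabla n)$ in the second equation produces $\|\nabla^{k+1}n\|_{L^2}^2$, and the contribution from $\mathrm{div}\,{\bf v}$ in the first equation produces $\|\nabla^k\,\mathrm{div}\,{\bf v}\|_{L^2}^2$; with appropriate signs, the latter moves to the dissipation side after multiplication by a small positive weight $\delta$. All remaining linear mixed-order terms (involving $\Delta n$ or $\varepsilon\Delta{\bf v}$) are controlled by Cauchy-Schwarz against the higher-order dissipation supplied by the level-$(k+1)$ identity, while the $\|\nabla^{k+1}n\|_{L^2}^2$ by-product is absorbed into the level-$k$ dissipation.

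Finally, I estimate the nonlinear terms $\nabla^k S_j$ and $\nabla^{k+1}S_j$ using Lemma \ref{lemma 2.2} for commutators and Lemma \ref{lemma 2.1} for Gagliardo--Nirenberg interpolation, exactly along the lines of \eqref{S}. Each nonlinear contribution carries a factor of order $\kappa^{1/2}M_0^{1/2}$ coming from the smallness \eqref{1.0} together with the uniform bound on $\|\nabla^2(n,{\bf v})\|_{L^2}$ from Proposition \ref{prop}, so all such terms are absorbed into the dissipation once $\kappa$ is small enough. The main technical obstacle I expect is the bookkeeping in the cross-term step: several error terms carry derivative orders strictly between $k$ and $k+2$, so the weights in the linear combination of the three identities must be tuned carefully so that every ``borrowed'' dissipation is strictly dominated by what is available on the left-hand side. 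Once this weighting closes, summing the three identities produces precisely \eqref{L3.3}.
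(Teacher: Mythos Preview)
Your plan is sound and is precisely the standard energy-method argument for this parabolic--hyperbolic coupling: the two straight $L^2$ identities at orders $k$ and $k{+}1$ give $\|\nabla^{k+1}n\|^2,\|\nabla^{k+2}n\|^2$ and the $\varepsilon$-weighted ${\bf v}$-dissipation, while the small-weighted cross-functional $-\delta\int\nabla^{k+1}n\cdot\nabla^k{\bf v}\,dx$ extracts the $\varepsilon$-independent term $\|\nabla^k\mathrm{div}\,{\bf v}\|^2$ at the price of a $\delta\|\nabla^{k+1}n\|^2$ that is reabsorbed. The residual linear errors from the cross-identity are, as you say, bounded by $\|\nabla^{k+2}n\|\,\|\nabla^k\mathrm{div}\,{\bf v}\|$ and $\varepsilon\|\nabla^{k+1}n\|\,\|\nabla^{k+2}{\bf v}\|$, both controllable by the available dissipation; the nonlinear pieces close via Lemmas~\ref{lemma 2.1}--\ref{lemma 2.2} exactly as in \eqref{S}, with the small factor $\kappa^{1/2}M_0^{1/2}$ allowing absorption.

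There is nothing to compare against here: the paper does not prove Lemma~\ref{lemma 3-3-1} but simply quotes it from \cite{WXY2016}, noting only that it is obtained ``based on the energy method.'' Your sketch is exactly that energy method. One small point of bookkeeping: as literally stated, \eqref{L3.3} differentiates the \emph{pure} Sobolev quantity, whereas your argument naturally produces a differential inequality for the modified functional $\|\nabla^k(n,{\bf v})\|_{H^1}^2-\delta\int\nabla^{k+1}n\cdot\nabla^k{\bf v}\,dx$. Since for $\delta$ small this is equivalent to $\|\nabla^k(n,{\bf v})\|_{H^1}^2$, the discrepancy is cosmetic (and is how \cite{WXY2016} handles it as well); just be explicit about this equivalence if you write the proof out in full.
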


Now, we are ready to show the decay rates for the solution of the system \eqref{yuan}.
First of all, we will prove the following upper bounds of the decay rates for the solution. 

\begin{lemma}
\label{lemma 3-3-0}
Under all of the assumptions in Theorem \ref{th}, the global classical solution $(n,{\bf v})$ of the system \eqref{yuan} satisfies
\begin{align}
\label{0}
\|n\|_{H^{N-k}(\mathbb{R}^d)}^2 + \|{\bf v}\|_{H^{N-k}(\mathbb{R}^d)}^2 \leq C(1+t)^{-\frac{d}{2}-k},
\end{align}
for $t \geq 0$ and $k=0,1,$ where $C$ is a positive constant  independent of time $t$.
\end{lemma}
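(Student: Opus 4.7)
The plan is to bootstrap the uniform-in-time bound of Proposition~\ref{prop} into the stated algebraic decay by combining the dissipative energy inequality of Lemma~\ref{lemma 3-3-1} with the $L^1$--$L^2$ linear decay of Lemma~\ref{U} via the Fourier splitting method. The case $k=0$ is settled first, and then its conclusion is fed back as input into the case $k=1$.

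For $k=0$, summing \eqref{L3.3} over $k'=0,\dots,N-1$ yields a differential inequality $\tfrac{d}{dt}\mathcal{E}(t)+C\mathcal{D}(t)\leq 0$ with $\mathcal{E}(t)\sim\|(n,{\bf v})\|_{H^N}^2$, where $\mathcal{D}$ controls every derivative of $n$ and the divergence of ${\bf v}$ (together with the $\varepsilon$-contribution) but misses the low-frequency part of $\mathcal{E}$. With $R(t)=\sqrt{C_0/(1+t)}$ and $S(t)=\{|\xi|\leq R(t)\}$, the inequality $|\xi|^2\geq R(t)^2$ on $S(t)^c$ converts the dissipation into a multiple of $\mathcal{E}$, producing
\[
\frac{d}{dt}\mathcal{E}(t)+\frac{C_1}{1+t}\mathcal{E}(t)\leq \frac{C_1}{1+t}\int_{S(t)}(1+|\xi|^2)^N|\hat U(t,\xi)|^2\,d\xi.
\]

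To control the low-frequency integral on the right, I would use Duhamel's formula in Fourier space,
\[
\hat U(t,\xi)=e^{tA(\xi)}\hat U_0(\xi)+\int_0^t e^{(t-s)A(\xi)}\widehat{(S_1,S_2)}(s,\xi)\,ds,
\]
together with the spectral bound $|e^{tA(\xi)}|\lesssim e^{-c|\xi|^2 t}$ for $|\xi|\leq\eta$ from the analysis of Section~3. The hypothesis $U_0\in L^1$ yields $\|\hat U_0\|_{L^\infty}\leq\|U_0\|_{L^1}$, and \eqref{S} gives $\|\widehat{(S_1,S_2)}(s)\|_{L^\infty}\leq C\kappa\|\nabla(n,{\bf v})(s)\|_{L^2}$, so that
\[
\int_{S(t)}|\hat U(t,\xi)|^2\,d\xi\leq C(1+t)^{-d/2}\bigl(\|U_0\|_{L^1}^2+\mathcal{B}(t)^2\bigr),
\]
where $\mathcal{B}(t)$ is a weighted time integral of $\|\nabla(n,{\bf v})(\cdot)\|_{L^2}$. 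Plugging this back into the differential inequality and integrating with the factor $(1+t)^{C_1}$, Gronwall's inequality closes the $k=0$ estimate as $\mathcal{E}(t)\leq C(1+t)^{-d/2}$.

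For $k=1$, the same scheme is repeated with $\mathcal{E}_1(t)\sim\|\nabla(n,{\bf v})\|_{H^{N-1}}^2$, where the extra weight $|\xi|^{2}$ in the low-frequency integrand provides the additional factor $R(t)^2=C_0/(1+t)$ required to upgrade the decay to $(1+t)^{-d/2-1}$; the Duhamel contribution $\mathcal{B}$ is now absorbed using the $k=0$ decay together with Sobolev interpolation against the uniform $H^N$ bound of Proposition~\ref{prop}. \textbf{The main obstacle} is the circular character of the low-frequency estimate, since $\mathcal{B}(t)$ involves exactly the quantity $\|\nabla(n,{\bf v})(s)\|_{L^2}$ being bounded. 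This is resolved by the nested bootstrap just described: the $k=0$ decay is obtained using only the time-integrability $\int_0^\infty\|\nabla(n,{\bf v})\|_{L^2}^2\,ds<\infty$ already provided by Proposition~\ref{prop}, and the $k=1$ decay then follows by re-injecting the $k=0$ rate into the Duhamel term, with the Fourier splitting radius carefully tuned so that Gronwall closes each step.
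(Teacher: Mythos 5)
Your proposal follows a genuinely different route from the paper's proof, so let me first contrast the two and then point out where yours breaks down. The paper does \emph{not} use Fourier splitting here. Instead, it sums \eqref{L3.3}, then adds $\|\nabla^k(n,{\bf v})\|_{L^2}^2$ to both sides (their inequality \eqref{+}), which turns the low-frequency gap in the dissipation into a full coercive term and hence produces an \emph{exponential} damping factor $e^{-C(t-\tau)}$ after Gronwall, rather than the algebraic $\tfrac{1}{1+t}$ weight that Fourier splitting yields. The forcing $\|\nabla(n,{\bf v})\|_{L^2}^2$ is then estimated by Duhamel in physical space using Lemma~\ref{U}, and the argument closes by a single bootstrap on the weighted supremum $F(t)=\sup_{\tau\le t}(1+\tau)^{\frac{d+2}{2}}\|\nabla(n,{\bf v})(\tau)\|_{H^{N-1}}^2$, giving $F(t)\le C(1+\delta F(t))$ and hence $F(t)\le C$. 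Note also that the paper derives the $k=1$ level \emph{first} (since $\|\nabla(n,{\bf v})\|_{L^2}^2$ is the forcing), and obtains $k=0$ afterwards from it; your proposal reverses this order.

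There is a concrete gap in your argument at the step you yourself identify as ``the main obstacle.'' Your claim that the $k=0$ case closes ``using only the time-integrability $\int_0^\infty\|\nabla(n,{\bf v})\|_{L^2}^2\,ds<\infty$'' does not hold, and in fact fails entirely for $d=2$. Using the $|\xi|$-gain from $\widehat{S_1}=i\xi\cdot\widehat{n{\bf v}}$, $\widehat{S_2}=-\varepsilon\, i\xi\,\widehat{|{\bf v}|^2}$ together with the uniform bound $\|(n,{\bf v})\|_{L^2}\lesssim\kappa$, the Duhamel contribution to your low-frequency integral is at best
\begin{equation*}
\int_{S(t)}|\xi|^2\Bigl(\int_0^t\|(n,{\bf v})(s)\|_{L^2}^2\,ds\Bigr)^2d\xi \;\lesssim\; R(t)^{d+2}\cdot t^2 \;\sim\;(1+t)^{\frac{2-d}{2}},
\end{equation*}
which for $d=2$ is $O(1)$: the differential inequality then gives $\mathcal{E}(t)\lesssim 1$, no decay whatsoever, and the iteration never starts. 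If one instead tries Cauchy--Schwarz against the kernel to bring in $\int_0^\infty\|\nabla(n,{\bf v})\|_{L^2}^2\,ds<\infty$, the factor $\bigl(\int_0^t e^{-c|\xi|^2(t-s)}ds\bigr)^{1/2}\le (c|\xi|^2)^{-1/2}$ forces one to confront $\int_{S(t)}|\xi|^{-2}d\xi$, which diverges logarithmically in $d=2$. Even for $d=3$ the first pass gives only $\mathcal{E}(t)\lesssim(1+t)^{-1/2}$, so the full rate $(1+t)^{-3/2}$ requires several rounds of iteration, contradicting your assertion that a single application of the energy dissipation bound suffices. As written your proof does not close for $d=2$; the paper's exponential-damping trick avoids precisely this loss of one power of $(1+t)$ that Fourier splitting incurs, and the weighted-sup bootstrap replaces the iteration you would need.
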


\begin{proof}  
Summing inequalities \eqref{L3.3} from $l=k$ to $l=m-1$, we get
\begin{align}
\label{leq1}
	\frac{d}{dt}\|\nabla^k (n,{\bf v})\|_{H^{m-k}(\mathbb{R}^d)}^2 + C\|\nabla^{k+1} n\|_{H^{m-k}(\mathbb{R}^d)}^2 + C\|\nabla^{k+1}{\bf v}\|_{H^{m-k-1}(\mathbb{R}^d)}^2 \leq 0.
\end{align}
Adding $\|\nabla^k(n,{\bf v})\|_{L^2(\mathbb{R}^d)}^2$ to two sides of \eqref{leq1}, it can be obtained that
\begin{align}
\label{+}
	\frac{d}{dt}\|\nabla^k (n,{\bf v})\|_{H^{m-k}(\mathbb{R}^d)}^2 + C\|\nabla^k (n,{\bf v})\|_{H^{m-k}(\mathbb{R}^d)}^2 \leq \|\nabla^k(n,{\bf v})\|_{L^2(\mathbb{R}^d)}^2.
\end{align}
 Let $\mathcal{F}_k^m  := \|\nabla^k (n,{\bf v})\|_{H^{m-k}(\mathbb{R}^d)}^2$, taking $m=N$ and $k=1$ in \eqref{+},  it holds that
 $$
 \frac{d}{dt} \mathcal{F}_1^N(t) + C\mathcal{F}_1^N(t) \leq \|\nabla(n,{\bf v})\|_{L^2(\mathbb{R}^d)}^2,
 $$
 and using the Gronwall's inequality, we deduce
\begin{align}
\label{F}
\mathcal{F}_1^N(t) \leq \mathcal{F}_1^N(0)e^{-Ct} + \int_{0}^{t}e^{-C(t-\tau)} \|\nabla(n,{\bf v})\|_{L^2(\mathbb{R}^d)}^2 d\tau.
\end{align}
In order to provide decay estimate of $\mathcal{F}_1^N(t)$, we need to discuss $\|\nabla(n,{\bf v})\|_{L^2(\mathbb{R}^d)}^2$. In fact, by the Duhamel's principle, the solution of \eqref{yuan}-\eqref{yuan2} can be expressed as
\begin{align}
\label{Du}
	(n,{\bf v})^{tr} = G(t)*(n_0,{\bf v}_0)^{tr} + \int_{0}^{t}G(t-\tau)*(S_1, S_2)^{tr}(\tau)d\tau.
\end{align}
Let $$F(t) = \displaystyle\sup_{\tau\in[0,t]}(1+\tau)^{\frac{d+2}{2}}(\|\nabla n(\tau)\|_{H^{N-1}(\mathbb{R}^d)}^2 + \|\nabla {\bf v}(\tau)\|_{H^{N-1}(\mathbb{R}^d)}^2).$$
 Using \eqref{S}, \eqref{Du} and Lemma \ref{U}, one can get
\begin{equation}
	\begin{aligned}
		\|\nabla(n,{\bf v})\|_{L^2(\mathbb{R}^d)}^2 &\leq C(1+t)^{-\frac{d+2}{2}} + C\int_0^t(\|(S_1,S_2)\|_{L^1(\mathbb{R}^d)}^2 \\
		&\quad + \|\nabla (S_1,S_2)\|_{L^2(\mathbb{R}^d)}^2)(1+t-\tau)^{-\frac{d+2}{2}}  d\tau\\
		&\leq C(1+t)^{-\frac{d+2}{2}} + C\int_0^t\delta(\|\nabla n\|_{H^1(\mathbb{R}^d)}^2 \\
		&\quad + \|\nabla{\bf v}\|_{H^1(\mathbb{R}^d)}^2)(1+t-\tau)^{-\frac{d+2}{2}}  d\tau\\
		&\leq C(1+t)^{-\frac{d+2}{2}} + C\delta F(t)\int_0^t(1+t-\tau)^{-\frac{d+2}{2}}(1+t)^{-\frac{d+2}{2}} d\tau\\
		&\leq C(1+t)^{-\frac{d+2}{2}} + C\delta F(t)(1+t)^{-\frac{d+2}{2}}.
	\end{aligned}
\end{equation}
Here, we utilized the following inequality:
\begin{equation*}
	\begin{aligned}
		&\quad\int_0^t(1+t-\tau)^{-\frac{d+2}{2}}(1+t)^{-\frac{d+2}{2}} d\tau\\
		&=\int_{0}^{\frac{t}{2}} (1+t-\tau)^{-\frac{d+2}{2}}(1+t)^{-\frac{d+2}{2}} d\tau
          + \int_{\frac{t}{2}}^{t}(1+t-\tau)^{-\frac{d+2}{2}}(1+t)^{-\frac{d+2}{2}} d\tau\\
		&\leq(1+\frac{t}{2})^{-\frac{d+2}{2}}\int_{0}^{\frac{t}{2}}(1+\tau)^{-\frac{d+2}{2}}d\tau
		+ (1+\frac{t}{2})^{-\frac{d+2}{2}}\int_{\frac{t}{2}}^{t}(1+t-\tau)^{-\frac{d+2}{2}}d\tau\\
		&\leq(1+t)^{-\frac{d+2}{2}}.
	\end{aligned}
\end{equation*}
Therefore, we obtain
\begin{align}
\label{1}
\|\nabla(n,{\bf v})\|_{L^2(\mathbb{R}^d)}^2 \leq C(1+t)^{-\frac{d+2}{2}}(1+\delta F(t)).
\end{align}
Substituting \eqref{1} into \eqref{F} gives that
\begin{equation}
\label{f}
	\begin{aligned}
		\mathcal{F}_1^N(t) &\leq C\mathcal{F}_1^N(0)e^{-Ct} + \int_{0}^{t}e^{-C(t-\tau)}(1+\tau)^{-\frac{d+2}{2}}(1+\delta F(\tau))d\tau\\
		&\leq C\mathcal{F}_1^N(0)e^{-Ct} + C(1+\delta F(t))\int_{0}^{t}e^{-C(t-\tau)}(1+\tau)^{-\frac{d+2}{2}}d\tau\\
		&\leq C\mathcal{F}_1^N(0)e^{-Ct} + C(1+\delta F(t))(1+t)^{-\frac{d+2}{2}}\\
		&\leq  C(1+\delta F(t))(1+t)^{-\frac{d+2}{2}},
	\end{aligned}
\end{equation}
with the help of the following inequality
\allowdisplaybreaks
\begin{equation*}
	\begin{aligned}
		&\quad\int_{0}^{t}e^{-C(t-\tau)}(1+\tau)^{-\frac{d+2}{2}}d\tau\\
		&=\int_{0}^{\frac{t}{2}}e^{-C(t-\tau)}(1+\tau)^{-\frac{d+2}{2}}d\tau
          + \int_{\frac{t}{2}}^{t}e^{-C(t-\tau)}(1+\tau)^{-\frac{d+2}{2}}d\tau\\
		&\leq e^{-\frac{C}{2}t}\int_{0}^{\frac{t}{2}}(1+\tau)^{-\frac{d+2}{2}}d\tau + (1+\frac{t}{2})^{-\frac{d+2}{2}}\int_{\frac{t}{2}}^{t}e^{-C(t-\tau)}d\tau\\
		&\leq C(1+t)^{-\frac{d+2}{2}}.
	\end{aligned}
\end{equation*}
Owing to the definition of $F(t)$ and \eqref{f}, it yields
$$
F(t)\leq C(1+\delta F(t)).
$$
At the same time, choosing  $\delta$ to be sufficiently small, we have
$$
F(t)\leq C.
$$
Therefore, we obtain the following decay estimate,
\begin{align}\label{1jie}
	\|\nabla n\|_{H^{N-1}(\mathbb{R}^d)}^2 + \|\nabla {\bf v}\|_{H^{N-1}(\mathbb{R}^d)}^2 \leq C(1+t)^{-\frac{d+2}{2}}.
\end{align}
On the other hand, using \eqref{S}, \eqref{Du}, \eqref{1jie} and Lemma \ref{U}, one has
\begin{equation}
	\begin{aligned}
			&\quad\|(n,{\bf v})\|_{L^2(\mathbb{R}^d)}^2\\
			&\leq C(1+t)^{-\frac{d}{2}} + C\int_{0}^{t}(\|(S_1,S_2)\|_{L^1(\mathbb{R}^d)}^2 + \|(S_1,S_2)\|_{L^2(\mathbb{R}^d)}^2)(1+t-\tau)^{-\frac{d}{2}}d\tau\\
			&\leq C(1+t)^{-\frac{d}{2}} + C\int_{0}^{t}\delta (\|\nabla n\|_{H^1(\mathbb{R}^d)}^2 + \|\nabla {\bf v}\|_{H^1(\mathbb{R}^d)}^2)(1+t-\tau)^{-\frac{d}{2}}d\tau\\
			&\leq C(1+t)^{-\frac{d}{2}} + C\int_{0}^{t}(1+t-\tau)^{-\frac{d+2}{2}}(1+\tau)^{-\frac{d}{2}}d\tau\\
			&\leq C(1+t)^{-\frac{d}{2}},
	\end{aligned}
\end{equation}
where we used the following inequality,
$$
\int_{0}^{t}(1+t-\tau)^{-\frac{d+2}{2}}(1+\tau)^{-\frac{d}{2}}d\tau\leq C(1+t)^{-\frac{d}{2}}.
$$
The proof of this lemma is completed.
\end{proof} 

Next, we will employ the Fourier splitting technique to improve the decay rates of higher-order spatial derivatives.

\begin{lemma}
	Under all of the assumptions in Theorem \ref{th}, the global classical solution $(n,{\bf v})$ of the system \eqref{yuan}--\eqref{yuan2} satisfies
\begin{align}
\label{N-11}
\|\nabla^k n\|_{H^{N-k}(\mathbb{R}^d)}^2 + \|\nabla^k{\bf v}\|_{H^{N-k}(\mathbb{R}^d)}^2 \leq C(1+t)^{-\frac{d}{2}-k},
\end{align}
for all $t \geq t_0$ ($t_0$ is a constant to be undetermined) and $k=0,1,2,\ldots,N-1$,
where $C$ is a positive constant independent of time $t$.
\end{lemma}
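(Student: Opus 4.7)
The plan is to prove \eqref{N-11} by induction on $k$, taking the cases $k = 0, 1$ from Lemma \ref{lemma 3-3-0} as the base and advancing to $k \geq 2$ via the Fourier splitting technique combined with the Duhamel representation \eqref{Du}. Fix $k \in \{2, \ldots, N-1\}$ and assume inductively that \eqref{N-11} holds for all orders below $k$. Summing the differential inequality \eqref{L3.3} from $l = k$ to $l = N-1$ yields
\begin{equation*}
\frac{d}{dt}\mathcal{F}_k^N(t) + C\bigl(\|\nabla^{k+1}n\|_{H^{N-k-1}}^2 + \|\nabla^k \nabla\!\cdot\!{\bf v}\|_{H^{N-k-1}}^2 + \varepsilon\|\nabla^{k+2}{\bf v}\|_{H^{N-k-2}}^2\bigr) \leq 0,
\end{equation*}
where $\mathcal{F}_k^N(t)$ is equivalent to $\|\nabla^k(n,{\bf v})\|_{H^{N-k}}^2$. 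The dissipation is one derivative stronger than $\mathcal{F}_k^N$ itself, which is exactly the setting in which Fourier splitting pays off.

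For a cutoff radius $R(t) > 0$ to be fixed, Plancherel's identity gives
\begin{equation*}
\|\nabla^{k+1}(n,{\bf v})\|_{L^2}^2 \geq R(t)^2\|\nabla^k(n,{\bf v})\|_{L^2}^2 - R(t)^2 \int_{|\xi|\leq R(t)}|\xi|^{2k}|\widehat{U}(t,\xi)|^2\,d\xi,
\end{equation*}
where the ${\bf v}$ component uses the curl-free structure inherited from ${\bf v} = -\nabla \ln c$ (propagated by the system since the source in the ${\bf v}$-equation is a pure gradient), which promotes $\|\nabla\!\cdot\!{\bf v}\|$ to control of $\|\nabla {\bf v}\|$. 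Substituting this into the summed inequality and choosing $R(t)^2 = C_0/(1+t)$ with $C_0$ large enough that $CC_0 > d/2 + k + 1$ leads, after multiplying by the integrating factor $(1+t)^{CC_0}$, to
\begin{equation*}
\frac{d}{dt}\bigl[(1+t)^{CC_0}\mathcal{F}_k^N(t)\bigr] \leq C(1+t)^{CC_0 - 1}\int_{|\xi|\leq R(t)}|\xi|^{2k}|\widehat{U}(t,\xi)|^2\,d\xi.
\end{equation*}

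The task then reduces to bounding the low-frequency integral. I will use the Duhamel formula \eqref{Du}: the linear contribution is controlled directly by the pointwise bounds on $\widehat{G}$ established in Lemma \ref{U} and contributes $C(1+t)^{-d/2-k}$. The quadratic nonlinearity is estimated via the $L^1$ and $L^2$ bounds on $(S_1, S_2)$ in \eqref{S}, together with the a priori decay \eqref{0} and the induction hypothesis at orders $j < k$; after a standard time-convolution estimate of the type $\int_0^t (1+t-\tau)^{-\alpha}(1+\tau)^{-\beta}\,d\tau$, this contribution is also of order $C(1+t)^{-d/2-k}$. Integrating in time, dividing by $(1+t)^{CC_0}$, and taking $t \geq t_0$ large enough to absorb the fixed initial contribution $\mathcal{F}_k^N(0)$ delivers \eqref{N-11}.

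The principal obstacle will be the Duhamel bootstrap for the nonlinear source: because $(S_1, S_2) \sim (n,{\bf v}) \cdot \nabla(n,{\bf v})$ is quadratic, showing that its convolution against the Green's function decays at the sharp rate $(1+t)^{-d/2-k}$ (and not a slower one) requires that the induction hypothesis supply exactly the expected decay at orders $k-1$ and below, and that the highest-order factor be paired with the top-order Green's function decay via Lemma \ref{U}. A secondary subtlety is the possibility $\varepsilon = 0$, in which case the dissipation for ${\bf v}$ is reduced to the divergence part; this is handled by the curl-free propagation noted above, so that the Fourier-splitting lower bound survives uniformly in $\varepsilon \geq 0$.
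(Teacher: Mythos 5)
Your overall strategy (induction plus Fourier splitting, base case from Lemma \ref{lemma 3-3-0}, curl-free promotion of $\|\nabla\!\cdot\!{\bf v}\|$ to $\|\nabla{\bf v}\|$ for the $\varepsilon=0$ case) matches the paper, but the central step diverges. The paper applies the low/high frequency split \emph{twice} in sequence: on $\mathbb{R}^d\setminus S_0$ it lowers $|\xi|^{2(k+2)}$ to $\frac{R}{1+t}|\xi|^{2(k+1)}$, then inside the resulting $S_0$-integral it lowers once more to $\frac{R^2}{(1+t)^2}|\xi|^{2k}$ and \emph{enlarges} the domain back to $\mathbb{R}^d$, producing the remainder $\frac{R^2}{(1+t)^2}\|\nabla^k(n,{\bf v})\|_{L^2}^2$ (see \eqref{nl}--\eqref{vl}). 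This remainder is then controlled by nothing more than the induction hypothesis \eqref{k=l}; no Duhamel or Green's function estimate is needed in this lemma at all. You instead stop after a single split, leaving the genuine low-frequency integral $\int_{|\xi|\le R(t)}|\xi|^{2k}|\widehat U|^2\,d\xi$, and propose to estimate it through Duhamel. That is a legitimate alternative route in principle, but it is considerably heavier, and your sketch leaves the crucial difficulty unaddressed: the naive bound on the nonlinear Duhamel term requires $\|\nabla^k(S_1,S_2)\|_{L^2}$, which by \eqref{S}-type estimates involves $\|\nabla^{k+1}(n,{\bf v})\|_{L^2}$ and is thus circular at step $k$; to avoid this one must either exploit that $S_1,S_2$ are exact divergences/gradients (gaining a factor $|\xi|$ and permitting a pure $L^1$ bound at low frequency) or replace the crude $\|S\|_{L^1}\lesssim\kappa\|\nabla(n,{\bf v})\|_{L^2}$ of \eqref{S} with the sharper $\|S\|_{L^1}\lesssim\|(n,{\bf v})\|_{L^2}\|\nabla(n,{\bf v})\|_{L^2}\lesssim(1+t)^{-(d+1)/2}$ coming from the already-established decay \eqref{0}. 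You flag this issue but do not resolve it. More to the point, having already restricted to $|\xi|\le R(t)$, you could simply write $|\xi|^{2k}\le R(t)^2|\xi|^{2(k-1)}$ and appeal to the induction hypothesis on $\|\nabla^{k-1}(n,{\bf v})\|_{L^2}^2$ — which is exactly the paper's second application of the split and makes the Duhamel machinery unnecessary. So while your framework is sound, the Duhamel detour is both more complex than needed and, as written, incomplete.
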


\begin{proof}  
We will use mathematical induction method and Fourier splitting technique to prove \eqref{N-11}. In fact, the inequality \eqref{0} implies that \eqref{N-11} holds for the case $k=1.$ Therefore, we assume that the decay estimate \eqref{N-11} holds for the case $k=l$,
\begin{align}
\label{k=l}
\|\nabla^l n\|_{H^{N-l}(\mathbb{R}^d)}^2 + \|\nabla^l{\bf v}\|_{H^{N-l}(\mathbb{R}^d)}^2 \leq C(1+t)^{-\frac{d}{2}-l},
\end{align}
where $l=1, 2, 3,\ldots, N-2.$ In what follows, we will prove that \eqref{N-11} also holds for the case $k=l+1.$ In fact, taking $k=l+1$ and $m=N$ in \eqref{leq1}, we can easily obtain
\begin{align}
	\frac{d}{dt}\|\nabla^{l+1} (n,{\bf v})\|_{H^{N-{l-1}}(\mathbb{R}^d)}^2 + C\|\nabla^{l+2} (n,{\bf v})\|_{H^{N-{l-1}}(\mathbb{R}^d)}^2 \leq 0,
\end{align}
which implies
\begin{align}
\label{fn}
\begin{aligned}
	&\frac{d}{dt}\|\nabla^{l+1} (n,{\bf v})\|_{H^{N-{l-1}}(\mathbb{R}^d)}^2 \\
	&+ \frac{C}{2}\big(\|\nabla^{l+2} n\|_{L^2(\mathbb{R}^d)}^2 + \|\nabla^{l+2} n\|_{H^{N-{l-1}}(\mathbb{R}^d)}^2 + \|\nabla^{l+2} {\bf v}\|_{H^{N-{l-1}}(\mathbb{R}^d)}^2\big) \leq 0.
\end{aligned}
\end{align}
Define the time ball
$$
S_0 := {\bigg\{} \zeta \in \mathbb{R}^d {\big |} |\zeta|\leq (\frac{R}{1+t})^{\frac{1}{2}} {\bigg\}},
$$
where $R$ is a constant to be determined. With the help of Parseval's inequality, we have
\begin{equation}
	\begin{aligned}
		\|\nabla^{k+2}n\|_{L^2(\mathbb{R}^d)} & = \int_{\mathbb{R}^d}|\zeta|^{2(k+2)}|\hat{n}|^2d\zeta\\
		&\geq \int_{\mathbb{R}^d/S_0}|\zeta|^{2(k+2)}|\hat{n}|^2d\zeta\\
		&\geq \frac{R}{1+t} \int_{\mathbb{R}^d/S_0}|\zeta|^{2(k+1)}|\hat{n}|^2d\zeta\\
		&\geq \frac{R}{1+t} \int_{\mathbb{R}^d}|\zeta|^{2(k+1)}|\hat{n}|^2d\zeta - \frac{R^2}{(1+t)^2}\int_{S_0}|\zeta|^{2k}|\hat{n}|^2d\zeta\\
		&\geq \frac{R}{1+t} \int_{\mathbb{R}^d}|\zeta|^{2(k+1)}|\hat{n}|^2d\zeta - \frac{R^2}{(1+t)^2}\int_{\mathbb{R}^d}|\zeta|^{2k}|\hat{n}|^2d\zeta.\\
	\end{aligned}
\end{equation}
Then, one has
\begin{equation}
\label{nl}
	\begin{aligned}
		\|\nabla^{l+2} n\|_{L^2(\mathbb{R}^d)}^2 \geq \frac{R}{1+t}\|\nabla^{l+1} n \|_{L^2(\mathbb{R}^d)}^2 - \frac{R^2}{(1+t)^2}\|\nabla^l n\|_{L^2(\mathbb{R}^d)}^2.
	\end{aligned}
\end{equation}
Similarly, one can get
\begin{align}
\label{vk}
	\|\nabla^{k+2}{\bf v} \|_{L^2(\mathbb{R}^d)}^2 \geq \frac{R}{1+t}\|\nabla^{k+1} {\bf v} \|_{L^2(\mathbb{R}^d)}^2 - \frac{R^2}{(1+t)^2}\|\nabla^k {\bf v} \|_{L^2(\mathbb{R}^d)}^2.
\end{align}
By summing the equation \eqref{vk} from $k=l$ to $k=N-1$, we can obtain
\begin{align}
\label{vl}
	\|\nabla^{l+2}{\bf v} \|_{H^{N-l-1}(\mathbb{R}^d)}^2 \geq \frac{R}{1+t}\|\nabla^{l+1} {\bf v} \|_{H^{N-l-1}(\mathbb{R}^d)}^2 - \frac{R^2}{(1+t)^2}\|\nabla^l {\bf v} \|_{H^{N-l-1}(\mathbb{R}^d)}^2.
\end{align}
Substituting equations \eqref{nl} and \eqref{vl} into \eqref{fn} yields that
\begin{equation}
	\begin{aligned}\label{fN}
		&\frac{d}{dt}\|\nabla^{l+1} (n,{\bf v})\|_{H^{N-{l-1}}(\mathbb{R}^d)}^2 + \frac{C}{2}{\big (} \|\nabla^{l+2} n\|_{H^{N-{l-2}}(\mathbb{R}^d)}^2 \\
		&\quad+ \frac{R}{1+t}(\|\nabla^{l+1} n\|_{L^2(\mathbb{R}^d)}^2 + \|\nabla^{l+1} {\bf v}\|_{H^{N-{l-1}}(\mathbb{R}^d)}^2 ){\big )}\\
		&\leq \frac{CR^2}{2(1+t)^2}\big(\|\nabla^ln\|_{L^2(\mathbb{R}^d)}^2 + \|\nabla^l {\bf v}\|_{H^{N-{l-1}}(\mathbb{R}^d)}^2\big).
	\end{aligned}
\end{equation}
For sufficiently large time $t\geq R-1$, one has
\begin{align}
\label{l+2}
	\frac{R}{1+t}\|\nabla^{l+2} n\|_{H^{N-{l-2}}(\mathbb{R}^d)}^2\leq \|\nabla^{l+2} n\|_{H^{N-{l-2}}(\mathbb{R}^d)}^2.
\end{align}
Substituting equations \eqref{l+2} into equation \eqref{fN} yields that
\begin{equation}
	\begin{aligned}
		&\quad\frac{d}{dt}\|\nabla^{l+1} (n,{\bf v})\|_{H^{N-{l-1}}(\mathbb{R}^d)}^2 + \frac{CR}{2(1+t)}(\|\nabla^{l+1} n\|_{H^{N-l-1}(\mathbb{R}^d)}^2 + \|\nabla^{l+1} {\bf v}\|_{H^{N-{l-1}}(\mathbb{R}^d)}^2 {\big )}\\
		&\leq \frac{CR^2}{2(1+t)^2}(\|\nabla^ln\|_{L^2(\mathbb{R}^d)}^2 + \|\nabla^l {\bf v}\|_{H^{N-{l-1}}(\mathbb{R}^d)}^2),
	\end{aligned}
\end{equation}
Together with the definition of $\mathcal{F}_l^m$ in Lemma \ref{lemma 3-3-0} and the time decay estimate \eqref{k=l}, it follows
\begin{align}
\label{7}
	\frac{d}{dt}\mathcal{F}_{l+1}^N(t) + \frac{CR}{2(1+t)}\mathcal{F}_{l+1}^N(t) \leq C(1+t)^{-\frac{d+4+2l}{2}}.
\end{align}
Thus, choosing $$R=\frac{2(l+d)}{C},$$
and multiplying both sides of \eqref{7} by $(1+t)^{l+\frac{d+3}{2}}$, one obtains
\begin{align}
\label{1/2}
	\frac{d}{dt}[(1+t)^{l+\frac{d+3}{2}}\mathcal{F}_{l+1}^N(t)] \leq C(1+t)^{-\frac{1}{2}}.
\end{align}
For any $$t\geq t_0 :=\frac{2(l+\frac{d+3}{2})C_2}{C}-1,$$  integrating \eqref{1/2} on the interval $[0, t]$, we have
\begin{align}
	\mathcal{F}_{l+1}^N(t) \leq [\mathcal{F}_{l+1}^N(0) + C(1+t)^\frac{1}{2}](1+t)^{-(l+\frac{d+3}{2})}.
\end{align}
With the help of the definition of $\mathcal{F}_{l+1}^N$, one obtains
\begin{align}
\label{N-1}
	\|\nabla^{l+1} n\|_{H^{N-{l-1}}(\mathbb{R}^d)}^2 + \|\nabla^{l+1}{\bf v}\|_{H^{N-{l-1}}(\mathbb{R}^d)}^2 \leq C(1+t)^{-\frac{d}{2}-(l+1)},
\end{align}
which implies that \eqref{N-11} holds for the case $k=l+1$.
This lemma has been proved.
\end{proof}   


\subsection{Optimal convergence rates
for critical derivative}

In this subsection, the optimal decay rates for critical spatial derivative ($k=N$) will be given.
To this end, we first show time integrability of the $N$-th order spatial derivative of the solution.
\begin{lemma}
\label{lemma 3.1}
Under all of the assumptions in Theorem \ref{th} , for any fixed constant $0 < \epsilon_0 < 1,$ it holds that
\begin{equation}
		(1+t)^{N-1+\frac{d}{2}}\|\nabla^{N-1} (n,{\bf v})\|_{L^2(\mathbb{R}^d)}^2 + (1+t)^{-\epsilon_0}\int_{0}^{t}(1+\tau)^{N+\frac{d}{2}+\epsilon_0-1}\|\nabla^{N} (n,{\bf v})\|_{L^2(\mathbb{R}^d)}^2d\tau \leq C,
\end{equation}
where $C$ is a positive constant independent of time $t$.
\end{lemma}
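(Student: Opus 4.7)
The first summand, $(1+t)^{N-1+d/2}\|\nabla^{N-1}(n,{\bf v})\|_{L^2}^2$, is already controlled by the decay estimate \eqref{N-11} applied at $k=N-1$, which yields $\|\nabla^{N-1}(n,{\bf v})\|_{L^2}^2\leq C(1+t)^{-(N-1+d/2)}$. The substantive work therefore concentrates on the weighted time integral of $\|\nabla^N(n,{\bf v})\|_{L^2}^2$, and for that I intend to run a weighted energy argument built on the dissipative differential inequality of Lemma \ref{lemma 3-3-1} at the top admissible level $k=N-1$.

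Starting from
\[
\frac{d}{dt}\mathcal{F}_{N-1}^N(t) + C\mathcal{D}_{N-1}(t) \leq 0, \qquad \mathcal{F}_{N-1}^N(t) = \|\nabla^{N-1}(n,{\bf v})\|_{L^2}^2 + \|\nabla^N(n,{\bf v})\|_{L^2}^2,
\]
where $\mathcal{D}_{N-1}(t)$ contains $\|\nabla^N n\|_{L^2}^2$, $\|\nabla^{N+1}n\|_{L^2}^2$, $\|\nabla^{N-1}\nabla\cdot{\bf v}\|_{L^2}^2$, and $\varepsilon\|\nabla^{N+1}{\bf v}\|_{L^2}^2$, I would multiply through by the time weight $(1+t)^{N-1+d/2+\epsilon_0}$ and apply the product rule to obtain
\[
\frac{d}{dt}\bigl[(1+t)^{N-1+d/2+\epsilon_0}\mathcal{F}_{N-1}^N\bigr] + C(1+t)^{N-1+d/2+\epsilon_0}\mathcal{D}_{N-1}
\leq \bigl(N-1+\tfrac{d}{2}+\epsilon_0\bigr)(1+t)^{N-2+d/2+\epsilon_0}\mathcal{F}_{N-1}^N.
\]
Substituting the pointwise decay $\mathcal{F}_{N-1}^N(t)\leq C(1+t)^{-(N-1+d/2)}$ supplied by \eqref{N-11} at $k=N-1$, the right hand side is majorised by $C(1+t)^{\epsilon_0-1}$, which is time-integrable with antiderivative $C(1+t)^{\epsilon_0}/\epsilon_0$. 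Integrating on $[0,t]$ and dividing through by $(1+t)^{\epsilon_0}$ then simultaneously delivers the pointwise bound on $\mathcal{F}_{N-1}^N$ and the integrability statement, with the dissipation accounting at once for $(1+t)^{-\epsilon_0}\int_0^t(1+\tau)^{N+d/2+\epsilon_0-1}\|\nabla^N n\|_{L^2}^2\,d\tau$.

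The principal obstacle is that $\mathcal{D}_{N-1}$ controls only the divergence component $\|\nabla^{N-1}\nabla\cdot{\bf v}\|_{L^2}^2$ of $\|\nabla^N{\bf v}\|_{L^2}^2$, and the diffusion contribution $\varepsilon\|\nabla^{N+1}{\bf v}\|_{L^2}^2$ degenerates uniformly as $\varepsilon\downarrow 0$. To complete the time-integral bound for the solenoidal component of $\nabla^N{\bf v}$, I would exploit the structural fact that the vorticity $\omega:=\nabla\times{\bf v}$ satisfies the source-free heat equation $\omega_t=\varepsilon\Delta\omega$ — indeed $S_2 = -\varepsilon\nabla|{\bf v}|^2$ is a gradient so $\nabla\times S_2\equiv 0$ — and use the Helmholtz identity $\|\nabla^N{\bf v}\|_{L^2}^2 = \|\nabla^{N-1}\nabla\cdot{\bf v}\|_{L^2}^2 + \|\nabla^{N-1}\omega\|_{L^2}^2$ to handle the curl piece separately via a Duhamel argument in the spirit of Section 3. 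Closing this last step — making sure the resulting bound on $\omega$ is uniform in $\varepsilon$ and compatible with the weight $(1+t)^{N+d/2+\epsilon_0-1}$ — is where I expect the main technical care to be required.
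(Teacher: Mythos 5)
Your main line of attack is the same as the paper's: multiply the dissipative inequality of Lemma~\ref{lemma 3-3-1} at $k=N-1$ by the weight $(1+t)^{N+\frac{d}{2}+\epsilon_0-1}$, absorb the commutator term coming from the time derivative of the weight using the already-proved pointwise decay \eqref{N-11} at $k=N-1$ (giving an integrand $\lesssim(1+t)^{\epsilon_0-1}$), and integrate in time. That is exactly the paper's proof, and the first summand of the claimed estimate follows from \eqref{N-11} directly, as you observe.

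Where you diverge is in your ``principal obstacle.'' You are right that, read literally, the dissipation in \eqref{L3.3} only contains $\|\nabla^{N-1}\nabla\cdot{\bf v}\|_{L^2}^2$ together with the $\varepsilon$-degenerate term $\varepsilon\|\nabla^{N+1}{\bf v}\|_{L^2}^2$, so one might worry about the curl part of $\nabla^N{\bf v}$. But your proposed fix — propagate the vorticity $\omega=\nabla\times{\bf v}$ by the heat equation $\omega_t=\varepsilon\Delta\omega$ and run a separate Duhamel bound — would not close: the decay of a free heat flow with diffusivity $\varepsilon$ is \emph{not} uniform in $\varepsilon$ and degenerates completely as $\varepsilon\downarrow 0$, which is precisely the uniformity the theorem asserts. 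You flag this yourself, and the concern is fatal for that route.

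The reason the paper does not need a separate argument for the curl is structural and you already wrote down the key fact without drawing the conclusion: ${\bf v}$ comes from the Cole–Hopf transformation ${\bf v}=-\nabla\ln c$, so $\omega_0=\nabla\times{\bf v}_0\equiv 0$; and since the source $S_2=-\varepsilon\nabla|{\bf v}|^2$ is a gradient, the vorticity equation is source-free, whence $\omega(t)\equiv 0$ for all $t\geq 0$. For a curl-free field the Plancherel identity gives $\|\nabla^{N-1}\nabla\cdot{\bf v}\|_{L^2}^2=\int|\xi|^{2(N-1)}|\xi\cdot\hat{\bf v}|^2\,d\xi=\int|\xi|^{2N}|\hat{\bf v}|^2\,d\xi=\|\nabla^N{\bf v}\|_{L^2}^2$, with equality rather than mere inequality. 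This is exactly why the paper can write $\mathcal{F}_k(t)\simeq\|\nabla^{k+1}n\|_{H^1}^2+\|\nabla^{k+1}{\bf v}\|_{L^2}^2+\varepsilon\|\nabla^{k+2}{\bf v}\|_{L^2}^2$ in the proof of this lemma and conclude the time-integral bound on the full $\|\nabla^N(n,{\bf v})\|_{L^2}^2$ without any Helmholtz decomposition. So: replace the Duhamel detour with the one-line observation that ${\bf v}$ is exactly curl-free, and your argument reduces to the paper's.
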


\begin{proof} 
By Lemma \ref{lemma 3-3-1}, for $0 \leq k \leq N-1$, we have
\begin{equation}
\label{ef}
	\begin{aligned}
		\frac{d}{dt} \mathcal{E}_k(t) + C_1\mathcal{F}_k(t) \leq 0,
	\end{aligned}
\end{equation}
where
$$\mathcal{E}_k(t) \simeq \|\nabla ^k  (n,{\bf v})\|_{H^1(\mathbb{R}^d)}^2,
 \quad 
 \mathcal{F}_k(t) \simeq \|\nabla ^{k+1} n\|_{H^1(\mathbb{R}^d)}^2 + \|\nabla ^{k+1} {\bf v}\|_{L^2(\mathbb{R}^d)}^2 + \varepsilon\|\nabla ^{k+2} {\bf v}\|_{L^2(\mathbb{R}^d)}^2.
$$
Setting $k=N-1$ in \eqref{ef}, we deduce
\begin{equation*}
\begin{aligned}
\frac{d}{dt} \mathcal{E}_{N-1}(t) + C_1\mathcal{F}_{N-1}(t) \leq 0.
\end{aligned}
\end{equation*}
For any fixed $\epsilon_0$ with $0 < \epsilon_0 < 1$, multiplying the above inequality by $(1+t)^{N+\frac{d}{2}+\epsilon_0-1}$,
one shows that
\begin{equation}
\label{ef2}
\begin{aligned}
\frac{d}{dt} \big[(1+t)^{N+\frac{d}{2}+\epsilon_0-1}\mathcal{E}_{N-1}(t)\big] + C_1(1+t)^{N+\frac{d}{2}+\epsilon_0-1}\mathcal{F}_{N-1}(t) \leq (1+t)^{N+\frac{d}{2}+\epsilon_0-2}\mathcal{E}_{N-1}(t).
\end{aligned}
\end{equation}
Noting the equivalent form of $\mathcal{E}_{N-1}(t)$, we can find that
\begin{equation*}
	\begin{aligned}
		(1+t)^{N+\frac{d}{2}+\epsilon_0-2}\mathcal{E}_{N-1}(t)&\leq C(1+t)^{N+\frac{d}{2}+\epsilon_0-2}\|\nabla ^{N-1} (n,{\bf v})\|_{H^1(\mathbb{R}^d)}^2\\
		& \leq C(1+t)^{N+\frac{d}{2}+\epsilon_0-2}(1+t)^{-(N-1+\frac{d}{2})}\\
		&\leq C(1+t)^{-1+\epsilon_0}.
	\end{aligned}
\end{equation*}
Thus, we have
\begin{equation}
\label{ef3}
	\begin{aligned}
		\frac{d}{dt} \big[(1+t)^{N+\frac{d}{2}+\epsilon_0-1}\mathcal{E}_{N-1}(t)\big] + C_1(1+t)^{N+\frac{d}{2}+\epsilon_0-1}\mathcal{F}_{N-1}(t) \leq C(1+t)^{-1+\epsilon_0}.
	\end{aligned}
\end{equation}
Integrating inequality \eqref{ef3} over $[0,t]$, one achieves
\begin{equation}
\label{ef4}
\begin{aligned}
(1+t)^{N+\frac{d}{2}+\epsilon_0-1}\mathcal{E}_{N-1}(t) + C_1\int_{0}^{t}(1+\tau)^{N+\frac{d}{2}+\epsilon_0-1}\mathcal{F}_{N-1}(\tau)d\tau \leq C(1+t)^{\epsilon_0}.
\end{aligned}
\end{equation}
Together with
$$\mathcal{E}_k(t) \simeq \|\nabla ^k (n,{\bf v})\|_{H^1(\mathbb{R}^d)}^2,  \quad  
\mathcal{F}_k(t) \simeq \|\nabla ^{k+1} n\|_{H^1(\mathbb{R}^d)}^2 + \|\nabla ^{k+1} {\bf v}\|_{L^2(\mathbb{R}^d)}^2 + \varepsilon\|\nabla ^{k+2} {\bf v}\|_{L^2(\mathbb{R}^d)}^2,$$ we arrive at
\begin{equation*}
	\begin{aligned}
&(1+t)^{N+\frac{d}{2}+\epsilon_0-1}\|\nabla^{N-1} (n,{\bf v})\|_{H^1(\mathbb{R}^d)}^2
\\
&\quad + \int_{0}^{t}(1+\tau)^{N+\frac{d}{2}+\epsilon_0-1}(\|\nabla^{N} n\|_{H^1(\mathbb{R}^d)}^2
+ \|\nabla ^{N}{\bf v}\|_{L^2(\mathbb{R}^d)}^2+\varepsilon\|\nabla ^{k+2} {\bf v}\|_{L^2(\mathbb{R}^d)}^2) d\tau
\\
&\leq C(1+t)^{\epsilon_0}.
	\end{aligned}
\end{equation*}	
This completes the proof of this lemma.
\end{proof}  

\begin{lemma}
\label{zuiyou}
Under all of the assumptions in Theorem \ref{th}, the global solution $(n,{\bf v})$ has the following decay estimate:
\begin{equation}
\label{Nyou}
\begin{aligned}
(1&+t)^{N+\frac{d}{2}}\|\nabla^N (n,{\bf v})\|_{L^2(\mathbb{R}^d)}^2
\\
&+(1+t)^{-\epsilon_0}\int_{0}^{t}(1+\tau)^{N+\frac{d}{2}+\epsilon_0}(\|\nabla^{N+1} n\|_{L^2(\mathbb{R}^d)}^2
+\varepsilon\|\nabla^{N+1} {\bf v}\|_{L^2(\mathbb{R}^d)}^2)d\tau \leq C,
\end{aligned}
\end{equation}
where $C$ is a positive constant independent of time $t$.
\end{lemma}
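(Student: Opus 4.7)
The plan is to combine a level-$N$ energy inequality with the time integrability established in Lemma \ref{lemma 3.1} and a monotonicity argument. The first step, which is the most delicate, is to establish at the critical derivative order the differential inequality
\[
\frac{d}{dt}\|\nabla^N(n,{\bf v})\|_{L^2}^2 + C\bigl(\|\nabla^{N+1}n\|_{L^2}^2 + \varepsilon\|\nabla^{N+1}{\bf v}\|_{L^2}^2\bigr) \leq 0.
\]
This is obtained by applying $\nabla^N$ to both equations of \eqref{yuan}, testing against $\nabla^N n$ and $\nabla^N{\bf v}$ respectively, and exploiting the cancellation of the linear cross terms $\pm\int\nabla^{N+1}n\cdot\nabla^N{\bf v}\,dx$. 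The nonlinear contributions $\int\nabla^N S_1\cdot\nabla^N n\,dx$ and $\int\nabla^N S_2\cdot\nabla^N{\bf v}\,dx$ are controlled using the commutator estimate of Lemma \ref{lemma 2.2}, the Sobolev interpolation of Lemma \ref{lemma 2.1}, and the smallness in \eqref{S}, so that for $\kappa$ sufficiently small they can be absorbed into the dissipation. Consequently, $t\mapsto\|\nabla^N(n,{\bf v})(t)\|_{L^2}^2$ is nonincreasing.

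Next I convert this monotonicity into the sharp pointwise bound using Lemma \ref{lemma 3.1}, which supplies $\int_0^t (1+\tau)^{N+\frac{d}{2}+\epsilon_0-1}\|\nabla^N(n,{\bf v})(\tau)\|_{L^2}^2\,d\tau \leq C(1+t)^{\epsilon_0}$. By the mean value theorem, for each $t\geq 1$ there exists $s_{*}\in[t/2,t]$ with $(1+s_{*})^{N+\frac{d}{2}+\epsilon_0-1}\|\nabla^N(n,{\bf v})(s_{*})\|_{L^2}^2 \leq 2C(1+t)^{\epsilon_0}/t$; since $(1+s_{*})\sim(1+t)$, this gives $\|\nabla^N(n,{\bf v})(s_{*})\|_{L^2}^2 \leq C(1+t)^{-(N+d/2)}$, and monotonicity propagates the bound to time $t$, while the short-time range is absorbed by Proposition \ref{prop} and the decay \eqref{N-11}. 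For the weighted integral statement, I multiply the level-$N$ energy inequality by $(1+t)^{N+\frac{d}{2}+\epsilon_0}$ and integrate over $[0,t]$: the resulting forcing term $(N+\frac{d}{2}+\epsilon_0)\int_0^t(1+\tau)^{N+\frac{d}{2}+\epsilon_0-1}\|\nabla^N(n,{\bf v})(\tau)\|_{L^2}^2\,d\tau$, combined with the pointwise bound just established, has integrand of size $(1+\tau)^{\epsilon_0-1}$ and integral at most $C(1+t)^{\epsilon_0}/\epsilon_0$, yielding the desired weighted bound for $\|\nabla^{N+1}n\|_{L^2}^2+\varepsilon\|\nabla^{N+1}{\bf v}\|_{L^2}^2$.

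The main obstacle is the first step, namely producing the clean level-$N$ energy inequality without derivative loss, since Lemma \ref{lemma 3-3-1} is stated only up to $k=N-1$. The most problematic nonlinear contributions, coming from the top-order terms in $S_1=\mathrm{div}(n{\bf v})$ and having the schematic form $\int n\,\nabla^N{\bf v}\cdot\nabla^{N+1}n\,dx$, touch the very top derivative in the dissipation and can only be absorbed through a Cauchy--Schwarz--Young splitting combined with the $L^\infty$-type smallness implied by the assumptions on the initial data. The assumption $\epsilon_0\in(0,1)$ is used in the final step to ensure that $(1+\tau)^{\epsilon_0-1}$ is integrable with the right growth rate.
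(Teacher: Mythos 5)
The central claim in your first step is false, and it is exactly the difficulty this lemma is designed to circumvent. At the top derivative level $k=N$ the available dissipation is only $\|\nabla^{N+1}n\|_{L^2}^2 + \varepsilon\|\nabla^{N+1}{\bf v}\|_{L^2}^2$; there is no $\|\nabla^N\nabla\cdot{\bf v}\|_{L^2}^2$ term as in Lemma~\ref{lemma 3-3-1}, because the coupled energy functional used there would require $\nabla^{N+1}n$ to be controlled pointwise in time, which is not available at this order, and when $\varepsilon=0$ there is no dissipation for ${\bf v}$ at all. Now take the term you yourself single out, $\int n\,\nabla^N{\bf v}\cdot\nabla^{N+1}n\,dx$. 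Cauchy--Schwarz and Young give $\delta\|\nabla^{N+1}n\|_{L^2}^2 + C_\delta\|n\|_{L^\infty}^2\|\nabla^N{\bf v}\|_{L^2}^2$, and the second piece has nowhere to be absorbed: $\|\nabla^N{\bf v}\|_{L^2}^2$ is not in the dissipation, smallness of $\|n\|_{L^\infty}$ does not help, and with $\varepsilon=0$ you cannot even interpolate against $\|\nabla^{N+1}{\bf v}\|_{L^2}^2$. So a clean differential inequality with nonpositive right-hand side is not obtainable; $t\mapsto\|\nabla^N(n,{\bf v})(t)\|_{L^2}^2$ is not known to be nonincreasing, and your monotonicity/mean-value argument therefore also collapses.

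What the paper does instead is precisely to \emph{not} absorb these contributions. After the splitting you describe, the residual terms are estimated using the already-established decay $\|(n,{\bf v})\|_{L^\infty}^2 \le C(1+t)^{-(1+\frac{d}{2})}$ from \eqref{N-11}, leading to
\[
\frac{d}{dt}\|\nabla^N(n,{\bf v})\|_{L^2}^2 + (1-\delta)\bigl(\|\nabla^{N+1}n\|_{L^2}^2 + \varepsilon\|\nabla^{N+1}{\bf v}\|_{L^2}^2\bigr)
\le C(1+t)^{-1}\|\nabla^N(n,{\bf v})\|_{L^2}^2 + C(1+t)^{-(N+1+d)}.
\]
Multiplying this by $(1+t)^{N+\frac{d}{2}+\epsilon_0}$ and integrating, the right-hand side produces precisely $\int_0^t(1+\tau)^{N+\frac{d}{2}+\epsilon_0-1}\|\nabla^N(n,{\bf v})\|_{L^2}^2\,d\tau$, which is controlled by $C(1+t)^{\epsilon_0}$ thanks to Lemma~\ref{lemma 3.1}; this is the sole purpose of that lemma. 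Your intuition about where the difficulty lies is correct, and the final weighted integration step is right, but the mechanism for dealing with the critical nonlinear term must be ``use the time decay of the low-order factors and close via the time integrability supplied by Lemma~\ref{lemma 3.1},'' not ``absorb by smallness.''
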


\begin{proof}
Applying the differential operator $\nabla^N$ to \eqref{yuan} and multiplying the resulting equations by $\nabla ^N n, \nabla ^N v$, we obtain
\begin{equation}
\label{37}
	\begin{aligned}
		&\frac{d}{dt}(\|\nabla^N (n,{\bf v})\|_{L^2(\mathbb{R}^d)}^2 + \|\nabla^{N+1} n\|_{L^2(\mathbb{R}^d)}^2 + \varepsilon\|\nabla^{N+1} {\bf v}\|_{L^2(\mathbb{R}^d)}^2) \\
		\leq &C\int_{\mathbb{R}^d} \nabla ^N {\rm div}(n{\bf v})\cdot \nabla^N n dx + C\varepsilon \int_{\mathbb{R}^d} -\nabla ^N (\nabla |{\bf v}|^2)\cdot \nabla^N {\bf v} dx.
	\end{aligned}
\end{equation}
On one hand, it follows from the Gagliardo-Nirenberg inequality and Lemma \ref{lemma 2.2} that
\begin{equation}
\label{38}
	\begin{aligned}
		&\int_{\mathbb{R}^d} \nabla ^N {\rm div}(n{\bf v})\cdot \nabla^N n dx \\
		\leq &C \|\nabla^N (n{\bf v})\|_{L^2(\mathbb{R}^d)}\|\nabla^{N+1} n\|_{L^2(\mathbb{R}^d)}\\
		\leq &C (\|[\nabla^N, n]{\bf v}\|_{L^2(\mathbb{R}^d)}+\|n\cdot\nabla^N {\bf v}\|_{L^2(\mathbb{R}^d)})\|\nabla^{N+1} n\|_{L^2(\mathbb{R}^d)}\\
		\leq &C(\|\nabla n\|_{L^\infty(\mathbb{R}^d)}\|\nabla^{N-1}{\bf v}\|_{L^2(\mathbb{R}^d)} + \|\nabla^N n\|_{L^2(\mathbb{R}^d)}\|{\bf v}\|_{L^\infty(\mathbb{R}^d)} \\
&+ \|\nabla^N {\bf v}\|_{L^2(\mathbb{R}^d)}\|n\|_{L^\infty(\mathbb{R}^d)})\|\nabla^{N+1} n\|_{L^2(\mathbb{R}^d)}\\
		\leq &\delta \|\nabla^{N+1} n\|_{L^2(\mathbb{R}^d)}^2 + C_{\delta}\|\nabla n\|_{L^\infty(\mathbb{R}^d)}^2\|\nabla^{N-1}{\bf v}\|_{L^2(\mathbb{R}^d)}^2 \\
		&+ C_{\delta}\|(n,{\bf v})\|_{L^\infty(\mathbb{R}^d)}^2\|\nabla ^N (n,{\bf v})\|_{L^2(\mathbb{R}^d)}^2.\\
	\end{aligned}
\end{equation}
For $d=2$, one has
\begin{equation}
	\begin{aligned}
		\|\nabla n\|_{L^\infty(\mathbb{R}^2)}\|\nabla^{N-1}{\bf v}\|_{L^2(\mathbb{R}^2)}\leq \|\nabla n\|_{L^2(\mathbb{R}^2)}^{\frac{1}{2}} \|\nabla^3 n\|_{L^2(\mathbb{R}^2)}^{\frac{1}{2}}\|\nabla^{N-1}{\bf v}\|_{L^2(\mathbb{R}^2)}\leq(1+t)^{-\frac{N+1+2}{2}},
	\end{aligned}
\end{equation}
while for $d=3$, we can find that
\begin{equation}
	\begin{aligned}
		\|\nabla n\|_{L^\infty(\mathbb{R}^3)}\|\nabla^{N-1}{\bf v}\|_{L^2(\mathbb{R}^3)}\leq  \|\nabla^2 n\|_{H^1(\mathbb{R}^3)}^{\frac{1}{2}}\|\nabla^{N-1}{\bf v}\|_{L^2(\mathbb{R}^3)}\leq(1+t)^{-\frac{N+1+3}{2}}.
	\end{aligned}
\end{equation}
Based on the above results, we conclude that
$$
\|(n,{\bf v})\|_{L^\infty(\mathbb{R}^d)}\|\nabla ^N (n,{\bf v})\|_{L^2(\mathbb{R}^d)} \leq (1+t)^{-\frac{1+\frac{d}{2}}{2}}\|\nabla ^N (n,{\bf v})\|_{L^2(\mathbb{R}^d)}.
$$
Therefore, we assert that
\begin{equation}
	\begin{aligned}
		&\int_{\mathbb{R}^d} \nabla ^N {\rm div}(n{\bf v})\cdot \nabla^N n dx \\
		\leq &\delta \|\nabla^{N+1} n\|_{L^2(\mathbb{R}^d)}^2 + C_{\delta}(1+t)^{-(1+\frac{d}{2})}\|\nabla ^N (n,{\bf v})\|_{L^2(\mathbb{R}^d)}^2 + C_{\delta} (1+t)^{-(N+1+d)}.
	\end{aligned}
\end{equation}
On the other hand, let us estimate the second term on the right hand of \eqref{37},
\begin{equation}
	\begin{aligned}\label{39}
		&-\varepsilon\int_{\mathbb{R}^d}\nabla ^N (\nabla |{\bf v}|^2)\cdot \nabla^N {\bf v} dx
 \\
 &=\varepsilon  \int_{\mathbb{R}^d} {\rm div}\cdot \nabla ^N {\bf v} \cdot \nabla^N {\bf v}^2 dx\\
		&\leq C\varepsilon  \|\nabla^{N+1}{\bf v}\|_{L^2(\mathbb{R}^d)}\|\nabla ^N {\bf v}^2\|_{L^2(\mathbb{R}^d)}\\
		&\leq C\varepsilon  (\|[\nabla^N, {\bf v}]{\bf v}\|_{L^2(\mathbb{R}^d)}+\|{\bf v}\cdot\nabla^N {\bf v}\|_{L^2(\mathbb{R}^d)})\|\nabla^{N+1}{\bf v}\|_{L^2(\mathbb{R}^d)}\\
		&\leq  \varepsilon\delta \|\nabla^{N+1}{\bf v}\|_{L^2(\mathbb{R}^d)}^2 +  \varepsilon C_{\delta}(1+t)^{-(1+\frac{d}{2})}\|\nabla ^N {\bf v}\|_{L^2(\mathbb{R}^d)}^2 +  \varepsilon C_{\delta}(1+t)^{-(N+1+d)}.
	\end{aligned}
\end{equation}
Substituting \eqref{38} and \eqref{39} into \eqref{37} and choosing $\epsilon$ and $\delta$ sufficiently small, we conclude that
\begin{equation}
\label{3.57}
	\begin{aligned}
		&\frac{d}{dt}\|\nabla^N (n,{\bf v})\|_{L^2(\mathbb{R}^d)}^2 + \|\nabla^{N+1} n\|_{L^2(\mathbb{R}^d)}^2 + \varepsilon\|\nabla^{N+1} {\bf v}\|_{L^2(\mathbb{R}^d)}^2 \\
		\leq &\delta \|\nabla^{N+1} n\|_{L^2(\mathbb{R}^d)}^2 + \varepsilon \delta \|\nabla^{N+1}{\bf v}\|_{L^2(\mathbb{R}^d)}^2 \\
		&+  C_{\delta}(1+t)^{-1}\|\nabla ^N (n,{\bf v})\|_{L^2(\mathbb{R}^d)}^2 + C_{\delta} (1+t)^{-(N+1+d)}.
	\end{aligned}
\end{equation}
Multiplying \eqref{3.57} by $(1+t)^{N+\frac{d}{2}+\epsilon_0}$, integrating the resulting inequality respect to time $t$ and using Lemma \ref{lemma 3.1}, one can get
\begin{equation*}
	\begin{aligned}
		&(1+t)^{N+\frac{d}{2}+\epsilon_0}\|\nabla ^N (n,{\bf v})\|_{L^2(\mathbb{R}^d)}^2 + \int_{0}^{t}(1+\tau)^{N+\frac{d}{2}+\epsilon_0}(\|\nabla ^{N+1} n\|_{L^2(\mathbb{R}^d)}^2 +\varepsilon \|\nabla ^{N+1}{\bf v}\|_{L^2(\mathbb{R}^d)}^2)d\tau\\
		\leq &C\|\nabla^N (n_0,{\bf v}_0)\|_{L^2(\mathbb{R}^d)}^2 + C\int_{0}^{t}(1+\tau)^{N+\frac{d}{2}+\epsilon_0-1}\|\nabla^N (n,{\bf v})\|_{L^2(\mathbb{R}^d)}^2d\tau + C\int_{0}^{t}(1+\tau)^{-1+\epsilon_0}d\tau\\
		\leq &C(1+\|\nabla^N (n_0,{\bf v}_0)\|_{L^2(\mathbb{R}^d)}^2) + C(1+t)^{\epsilon_0}.
	\end{aligned}
\end{equation*}
which deduces the desired estimate directly.
The proof of this lemma is completed.
\end{proof}


\subsection{Lower bounds of the convergence rates}

In this subsection, we want to establish lower bounds of decay rates for the global solution to the initial value problem \eqref{yuan}. For this purpose, we need to discuss the upper bounds of decay rates for $(\underline{n}, \underline{\bf v})$ (defined below) and its first-order spatial derivative.

Define $(\underline{n}, \underline{\bf v}) := (n-\widetilde{n}, {\bf v}-\widetilde{{\bf v}}),$ then
$(\underline{n}, \underline{\bf v})$ satisfies
\begin{equation}
\label{feixian}
	\left\{
	\begin{array}{l}
		\underline{n}_t - \Delta \underline{n} - {\rm div}\underline{\bf v} = S_1, \\
		\underline{\bf v}_{t} -  \Delta \underline{\bf v} - \nabla \underline{n} = S_2,\\
		(\underline{n},\underline{\bf v})(x,0)=(0,0).
	\end{array}
	\right.
\end{equation}

\begin{lemma}
	Under all of the assumptions in Theorem \ref{th}, it holds that for $t \geq 0$,
\begin{equation}
\label{feixianshang}
\begin{aligned}
\|\nabla ^k(\underline{n}, \underline{\bf v})(t) \|_{L^2(\mathbb{R}^d)} \leq \widetilde{C}\gamma (1+t)^{-\frac{d}{4}-\frac{k}{2}},
\quad \text{for}\  k=0,1,
\end{aligned}
\end{equation}
where $\gamma=\min\{\kappa, \kappa^\frac12M_0^{\frac12}\}$ and $\widetilde{C}$ is a constant independent of time $t$.
\end{lemma}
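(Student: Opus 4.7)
The proof will proceed by Duhamel's principle applied to the inhomogeneous linear system \eqref{feixian}, combined with the linear $L^2$ decay estimate of Lemma \ref{U} and the already-established upper bounds on $(n,\mathbf{v})$ and its derivatives from Lemma \ref{lemma 3-3-0} and the subsequent Fourier splitting lemma. Since $(\underline{n},\underline{\mathbf{v}})$ solves the same linearized system as $(\widetilde{n},\widetilde{\mathbf{v}})$ with zero initial data and forcing $(S_1,S_2)$, Duhamel's formula reads
\begin{equation*}
(\underline{n},\underline{\mathbf{v}})^{tr}(t) = \int_0^t G(t-\tau)\ast (S_1,S_2)^{tr}(\tau)\,d\tau,
\end{equation*}
and applying $\nabla^k$ together with Lemma \ref{U} gives, for $k=0,1$,
\begin{equation*}
\|\nabla^k(\underline{n},\underline{\mathbf{v}})(t)\|_{L^2} \leq C\int_0^t (1+t-\tau)^{-\frac{d}{4}-\frac{k}{2}}\bigl(\|(S_1,S_2)(\tau)\|_{L^1}+\|\nabla^k(S_1,S_2)(\tau)\|_{L^2}\bigr)\,d\tau.
\end{equation*}

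The source terms are controlled by \eqref{S}: the $L^1$ bound carries a factor $\kappa$ (via $\|(n,\mathbf{v})\|_{L^2}\leq \kappa$) while the $L^2$ bound carries a factor $\kappa^{1/2}M_0^{1/2}$, so at every $\tau$ one has $\|(S_1,S_2)(\tau)\|_{L^1}+\|\nabla^k(S_1,S_2)(\tau)\|_{L^2}\leq C\gamma\bigl(\|\nabla n\|_{L^2}+\|\nabla\mathbf{v}\|_{L^2}+\|\nabla^{k+1}n\|_{L^2}+\|\nabla^{k+1}\mathbf{v}\|_{L^2}\bigr)$ after absorbing the smaller of the two smallness constants into $\gamma$. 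Plugging in the already-proved decay $\|\nabla^j(n,\mathbf{v})\|_{L^2}\leq C(1+\tau)^{-\frac{d}{4}-\frac{j}{2}}$ for $j=1,2$, the problem reduces to the standard time-convolution integral
\begin{equation*}
\int_0^t (1+t-\tau)^{-\frac{d}{4}-\frac{k}{2}}(1+\tau)^{-\frac{d}{4}-\frac{j}{2}}\,d\tau,\qquad j\in\{1,k+1\},
\end{equation*}
which, after splitting at $\tau=t/2$, is bounded by $C(1+t)^{-\frac{d}{4}-\frac{k}{2}}$ whenever the exponent $\frac{d}{4}+\frac{j}{2}$ on the second factor strictly exceeds one.

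The cases $k=0,1$ would be treated in parallel rather than sequentially. The anticipated obstacle is the borderline case $d=2$, $k=0$, where one encounters $\int_0^t (1+t-\tau)^{-1/2}(1+\tau)^{-1}\,d\tau$ which produces a spurious $\log(1+t)$ factor if one uses only the crude bound $\|(S_1,S_2)\|_{L^1}\leq \kappa\|(\nabla n,\nabla\mathbf{v})\|_{L^2}$. To eliminate the logarithm, the plan is to upgrade the $L^1$ estimate by exploiting both factors in the pointwise bound $\|(S_1,S_2)\|_{L^1}\leq \|(n,\mathbf{v})\|_{L^2}\|(\nabla n,\nabla\mathbf{v})\|_{L^2}$, using the already-established decay $\|(n,\mathbf{v})\|_{L^2}\leq C(1+\tau)^{-d/4}$ together with $\|(n,\mathbf{v})\|_{L^2}\leq \kappa$ (interpolating so as to retain the small factor $\gamma$). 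The resulting integrand then decays at rate $(1+\tau)^{-d/2-1/2}$, which lies safely above one and returns the clean bound $(1+t)^{-d/4}$ in both dimensions $d=2,3$, completing the proof.
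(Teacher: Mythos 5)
Your proposal follows the same route as the paper: Duhamel's formula for the remainder $(\underline{n},\underline{\bf v})$, the linear decay estimate of Lemma~\ref{U} applied to $G(t-\tau)\ast(S_1,S_2)(\tau)$, the source bounds \eqref{S} combined with the established decay of $(n,{\bf v})$, and a time-convolution estimate. You are, however, more careful than the paper at a point where its written argument is actually deficient in two dimensions. The paper uses only $\|(n,{\bf v})\|_{L^2}\leq\kappa$ to get $\|(S_1,S_2)\|_{L^1}\leq C\gamma(1+\tau)^{-(d+2)/4}$ and then asserts $\int_0^t(1+t-\tau)^{-d/4}(1+\tau)^{-(d+2)/4}\,d\tau\leq C(1+t)^{-d/4}$; but for $d=2$ the exponent $(d+2)/4$ equals $1$, so the $\tau\in[0,t/2]$ half of the integral picks up a $\log(1+t)$ factor, and the same logarithm appears at $k=1$, where the kernel $(1+t-\tau)^{-1}$ is convolved against the $L^1$ source term $(1+\tau)^{-1}$. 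Your fix — exploiting the decay $\|(n,{\bf v})\|_{L^2}\leq C(1+\tau)^{-d/4}$ in the $L^1$ bound for $(S_1,S_2)$, interpolated against the static bound $\leq\kappa$ to keep a small prefactor — raises the rate of the $L^1$ source strictly above $1$ for both $d=2$ and $d=3$ and closes the argument cleanly; it also handles the $k=1$ borderline, which you did not explicitly flag but is covered by the same estimate. One caveat worth recording: the interpolation trades the prefactor down to $\kappa^\theta$ with $0<\theta<1$, which is larger than the paper's $\gamma=\min\{\kappa,\kappa^{1/2}M_0^{1/2}\}$, so strictly the lemma's small constant should be read as a positive power of $\kappa$; since this still tends to $0$ as $\kappa\to 0$, the later absorption $\widetilde C\gamma\leq\tfrac12\tilde c$ in the lower-bound lemma is unaffected.
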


\begin{proof}
By Duhamel's principle, it shows that,  for $k \geq 0$,
\begin{equation}
\label{delta}
	\begin{aligned}
		\|\nabla^k(\underline{n}, \underline{\bf v})\|_{L^2(\mathbb{R}^d)} \leq \int_{0}^{t}(1+t-\tau)^{-\frac{d}{4}-\frac{k}{2}}(\|(S_1, S_2)(\tau)\|_{L^1(\mathbb{R}^d)} + \|\nabla ^k(S_1, S_2)(\tau) \|_{L^2(\mathbb{R}^d)})d\tau.
	\end{aligned}
\end{equation}
Then, from the decay estimates \eqref{S} and \eqref{N-11}, it follows that
\begin{equation}
\label{SS}
	\begin{aligned}
		\|(S_1,S_2)\|_{L^1(\mathbb{R}^d)} \leq C\gamma(1+t)^{-\frac{d+2}{4}},\\
		\|(S_1,S_2)\|_{L^2(\mathbb{R}^d)} \leq C\gamma(1+t)^{-\frac{d+2}{4}},\\
		\|\nabla (S_1,S_2)\|_{L^2(\mathbb{R}^d)} \leq C\gamma(1+t)^{-\frac{d+4}{4}}.\\
	\end{aligned}
\end{equation}
Then, substituting the decay estimates \eqref{SS} into \eqref{delta} with $k=0$ and $k=1 $ respectively, one obtains that
\begin{equation*}
	\begin{aligned}
		\|(\underline{n}, \underline{\bf v})\|_{L^2(\mathbb{R}^d)} &\leq \int_{0}^{t}(1+t-\tau)^{-\frac{d}{4}}(\|(S_1, S_2)(\tau)\|_{L^1} + \|(S_1, S_2)(\tau) \|_{L^2(\mathbb{R}^d)})d\tau\\
		&\leq C\gamma\int_{0}^{t}(1+t-\tau)^{-\frac{d}{4}}(1+\tau)^{-\frac{d+2}{4}}d\tau\\
		&\leq C \gamma(1+t)^{-\frac{d}{4}},
	\end{aligned}
\end{equation*}
	  and
	  \begin{align*}
	  	\|\nabla(\underline{n}, \underline{\bf v})\|_{L^2(\mathbb{R}^d)} &\leq \int_{0}^{t}(1+t-\tau)^{-\frac{d+2}{4}}(\|(S_1, S_2)(\tau)\|_{L^1(\mathbb{R}^d)} + \|\nabla (S_1, S_2)(\tau) \|_{L^2(\mathbb{R}^d)})d\tau\\
	  	&\leq \gamma\int_{0}^{t}(1+t-\tau)^{-\frac{d+2}{4}}(1+\tau)^{-\frac{d+4}{4}}d\tau\\
	  	&\leq C\gamma(1+t)^{-\frac{d+2}{4}}.
	  \end{align*}
Therefore, the proof of this lemma is completed.
\end{proof} 

Next, we establish the lower bounds of decay rates for the global solution and its spatial derivatives of system \eqref{yuan}.

\begin{lemma}
	Under all of the assumptions in Theorem \ref{th1},   we have, for any $t$ large enough,
\begin{equation}
\label{zong}
		\min\{\|\nabla ^k n(t)\|_{L^2(\mathbb{R}^d)}, \|\nabla ^k {\bf v}(t)\|_{L^2(\mathbb{R}^d)}\} \geq \tilde{c} (1+t)^{-\frac{d}{4}-\frac{k}{2}}, \quad \text{for}\quad  0\leq k \leq N,
\end{equation}
where $\tilde{c}$  is a positive constant independent of time $t$.
\end{lemma}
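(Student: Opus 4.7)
The plan is to combine the linear lower bound from Lemma \ref{linear} with the nonlinear-remainder estimate \eqref{feixianshang} for the low orders $k=0,1$, and then promote the result to higher orders via Gagliardo-Nirenberg interpolation. Write $(n,{\bf v}) = (\tilde{n},\tilde{{\bf v}}) + (\underline{n},\underline{{\bf v}})$, where $(\tilde{n},\tilde{{\bf v}})$ solves the linearized system \eqref{3.2} with the same initial data and $(\underline{n},\underline{{\bf v}})$ is governed by \eqref{feixian}. The reverse triangle inequality produces
$$\|\nabla^k n(t)\|_{L^2} \;\geq\; \|\nabla^k \tilde{n}(t)\|_{L^2} - \|\nabla^k \underline{n}(t)\|_{L^2},$$
and analogously for ${\bf v}$, reducing the task to bounding the two pieces separately.

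For $k=0$ and $k=1$ I would apply the component-wise lower bounds extracted from the proof of Lemma \ref{linear} to $\|\nabla^k \tilde{n}\|_{L^2}$ and $\|\nabla^k \tilde{{\bf v}}\|_{L^2}$ and the upper bound \eqref{feixianshang} to $\|\nabla^k\underline{n}\|_{L^2}$ and $\|\nabla^k\underline{{\bf v}}\|_{L^2}$. Because \eqref{feixianshang} carries the prefactor $\gamma = \min\{\kappa,\kappa^{1/2}M_0^{1/2}\}$, shrinking $\kappa$ so that $\widetilde{C}\gamma \leq \tilde{c}/2$ forces the nonlinear correction to be strictly dominated by the linear lower bound. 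Taking the difference component by component then yields $\|\nabla^k n\|_{L^2},\ \|\nabla^k {\bf v}\|_{L^2} \geq (\tilde{c}/2)(1+t)^{-d/4-k/2}$, which is \eqref{zong} for $k=0,1$.

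For $2 \leq k \leq N$ I would bootstrap using the Gagliardo-Nirenberg inequality (Lemma \ref{lemma 2.1}) in the form
$$\|\nabla n\|_{L^2} \leq C\,\|n\|_{L^2}^{1-1/k}\,\|\nabla^k n\|_{L^2}^{1/k},$$
which rearranges into the lower bound
$$\|\nabla^k n\|_{L^2} \geq C^{-k}\,\|\nabla n\|_{L^2}^{k}\,\|n\|_{L^2}^{-(k-1)}.$$
Plugging in the $k=1$ lower bound established in the previous step together with the upper bound $\|n\|_{L^2}\leq C(1+t)^{-d/4}$ from Theorem \ref{th}, the time exponents telescope exactly to $-d/4-k/2$. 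The same interpolation applied to ${\bf v}$ completes \eqref{zong} for all $0\leq k\leq N$.

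The main obstacle I anticipate is in the first step: verifying that the nonlinear correction is genuinely dominated by the linear lower rate. This hinges on both the smallness of the initial perturbation (hidden in $\gamma$) and on the extraction of \emph{component-wise} (rather than merely aggregate) lower bounds for $\tilde{n}$ and $\tilde{{\bf v}}$ from the low-frequency spectral analysis in the proof of Lemma \ref{linear}; the latter is implicit in that proof but deserves care, since Lemma \ref{linear} is stated for the combined norm. Once the $k=0,1$ bounds are secured, the Gagliardo-Nirenberg step is essentially algebraic bookkeeping.
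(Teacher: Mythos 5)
Your proposal is correct and matches the paper's proof essentially line for line: the same decomposition into linear part plus Duhamel remainder, the same reverse-triangle-inequality step with $\widetilde{C}\gamma\leq\tilde{c}/2$ for $k=0,1$, and the same Gagliardo--Nirenberg bootstrap $\|\nabla^k n\|_{L^2}\geq C\|\nabla n\|_{L^2}^k\|n\|_{L^2}^{-(k-1)}$ for $k\geq 2$. The component-wise concern you flag is real but is already resolved inside the proof of Lemma \ref{linear}, which establishes \eqref{n} and \eqref{v} separately before combining them into a bound on $\min\{\|\widetilde n\|_{L^2},\|\widetilde{\bf v}\|_{L^2}\}$.
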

\begin{proof}
By virtue of the definition of $\underline{n}$, we have
$$
\|\nabla ^k \tilde{n}\|_{L^2(\mathbb{R}^d)} \leq \|\nabla ^k n \|_{L^2(\mathbb{R}^d)} + \|\nabla ^k \underline{n}\|_{L^2(\mathbb{R}^d)},
$$
which, together with \eqref{nv} and \eqref{feixianshang},  yields directly
\begin{equation}
\label{01}
	\|\nabla ^k n \|_{L^2(\mathbb{R}^d)} \geq \|\nabla ^k \tilde{n}\|_{L^2(\mathbb{R}^d)} - \|\nabla ^k \underline{n}\|_{L^2(\mathbb{R}^d)} \geq \tilde{c}(1+t)^{-\frac{d}{4}-\frac{k}{2}}- \tilde{C}\gamma (1+t)^{-\frac{d}{4}-\frac{k}{2}},
\end{equation}
where $k=0,1.$ It is worth noting that the small constant $\gamma$ is used to control the upper bound of initial data in $L^2$ -norm instead of $L^1$ one . From the estimate \eqref{nv} in Lemma \ref{linear}, the constant $\tilde{c}$ in
\eqref{01} only depends on $M_n$ and $M_v$. Then, we can choose $\gamma$ sufficiently small such that $\tilde{C}\gamma \leq \frac{1}{2}\tilde{c}$, and it follows from \eqref{01} that
\begin{equation}
\label{k01}
	\|\nabla ^k n \|_{L^2(\mathbb{R}^d)} \geq \frac{1}{2}\tilde{c}(1+t)^{-\frac{d}{4}-\frac{k}{2}}, \quad k=0,1.
\end{equation}
By virtue of the Sobolev interpolation inequality in Lemma \ref{lemma 2.1}, it holds that, for $k \geq 2$,
$$
\|\nabla n\|_{L^2(\mathbb{R}^d)} \leq C\| n\|_{L^2(\mathbb{R}^d)}^{1-\frac{1}{k}}\|\nabla^k n\|_{L^2(\mathbb{R}^d)}^{\frac{1}{k}},
$$
which, together with the lower bounds of decay rates \eqref{k01} and upper bounds of decay rates \eqref{N-1}\eqref{Nyou},  implies that
\begin{equation}
\label{kn}
	\|\nabla ^k n \|_{L^2(\mathbb{R}^d)} \geq C \|\nabla n \|_{L^2(\mathbb{R}^d)}^k\|n\|_{L^2(\mathbb{R}^d)}^{-(k-1)} \geq C(1+t)^{-\frac{(d+2)k}{4}}(1+t)^{\frac{d(k-1)}{4}} \geq \tilde{c} (1+t)^{-\frac{d}{4}-\frac{k}{2}},
\end{equation}
for $k\geq 2$. Similarly, it is easy to deduce that
\begin{equation}
\label{kv}
	\|\nabla ^k {\bf v} \|_{L^2(\mathbb{R}^d)}  \geq \tilde{c} (1+t)^{-\frac{d}{4}-\frac{k}{2}}, \quad for \quad k\geq 0.
\end{equation}
Then, the combination of estimates \eqref{k01}, \eqref{kn} and \eqref{kv} yields the estimate \eqref{zong}. Therefore, we complete the proof of this lemma.
\end{proof}

\subsection{The proof of Theorem \ref{th} and Theorem \ref{th1}}
Under all of the assumptions in Theorem \ref{th} and Theorem \ref{th1}, with the help of estimations \eqref{nv}, \eqref{N-11}, \eqref{Nyou} and \eqref{zong}, we arrive at
\begin{align}
\label{3.67}
	\tilde{c}(1+t)^{-\frac{d+2k}{4}}\leq	\Vert \nabla ^k(u-\bar{u},{\bf v})(t) \Vert _{L^2(\mathbb{R}^d)} \leq C(1+t)^{-\frac{d+2k}{4}}, \quad for \quad k=0, 1, \ldots, N,
\end{align}
where $\tilde{c}$ and $C$ are positive constants independent of time. The Proof of Theorem \ref{th} and Theorem \ref{th1} is completed. 
Thus, we obtain the decay rates of the solution, which are the same as the decay rates of the linear system. Therefore, they are shown to be optimal.


\section{Convergence Rate of the Original System}

In order to prove Theorem \ref{th2}, we can discuss in the following steps. Firstly, noticing that the function $u$ keeps the same in both the original model \eqref{1.1} and the transformed system \eqref{1.2}, it is straightforward to establish the results for $u$. Secondly, using the Cole-Hopf transformation, 
we can prove the existence of global classical solutions of \eqref{1.1} by applying Proposition \ref{prop} and parabolic regularity theory. Finally, based on \eqref{thn} and the Gagliardo-Nirenberg inequality for dimensions $d=2,3$, we can derive the time convergence rate for $u$ as
\begin{equation}
\label{4.1}
	\begin{aligned}
		\|u-\bar{u}\|_{L^\infty(\mathbb{R}^d)}&=\|n\|_{L^\infty(\mathbb{R}^d)}\\
		                       &\leq C\|n\|_{L^2(\mathbb{R}^d)}^\frac{1}{2}\|\nabla^d n\|_{L^2(\mathbb{R}^d)}^\frac{1}{2}\\
		                       &\leq C\|n\|_{L^2(\mathbb{R}^d)}^\frac{1}{2}\|\nabla^2 n\|_{L^(\mathbb{R}^d)}^\frac{1}{2}\\
		                       &\leq C  (1+t)^{-\frac{d+2}{4}},
	\end{aligned}
\end{equation}
where the decay rates of the higher derivatives for the solution in Theorem \ref{th} are used.

To investigate the decay rate of the chemical concentration $c$, the second equation of \eqref{1.1} and the Cole-Hopf transformation 
yield the following equality
$$
({\rm ln}c)_t = -\varepsilon\nabla{\bf v} + \varepsilon{\bf v}^2 - u,
$$
which, together with integration, gives
\begin{equation}
	\begin{aligned}\label{c}
		c(x,t)=c_0(x){\rm exp}(-\bar{u}t + \int_0^t(\bar{u}-u + \varepsilon({\bf v}^2-\nabla {\bf v}))d\tau).
	\end{aligned}
\end{equation}
It follows from \eqref{thn} and Lemma \ref{lemma 2.1} that
\begin{equation}
	\begin{aligned}\label{u}
		\int_0^t \|u-\bar{u}\|_{L^\infty(\mathbb{R}^d)}d\tau &\leq C	\int_0^t \|n\|_{L^2(\mathbb{R}^d)}^\frac{1}{2}\|\nabla^d n\|_{L^2(\mathbb{R}^d)}^\frac{1}{2}d\tau\\
		&\leq C	\int_0^t \|n\|_{L^2(\mathbb{R}^d)}^\frac{1}{2}\|\nabla^2 n\|_{L^(\mathbb{R}^d)}^\frac{1}{2}d\tau\\
		&\leq C	\int_0^t (1+\tau)^{-\frac{d+2}{4}}d\tau\\
			&\leq C(1+t)^{\frac{2-d}{4}}.
	\end{aligned}
\end{equation}
Similarly, it concludes
\begin{equation}
	\begin{aligned}
		\int_0^t \|{\bf v}^2\|_{L^\infty(\mathbb{R}^d)}d\tau \leq C(1+t)^{-\frac{d}{2}}
	\end{aligned}
\end{equation}
and
\begin{equation}
	\begin{aligned}\label{v2}
		\int_0^t \|\nabla{\bf v}\|_{L^\infty(\mathbb{R}^d)}d\tau \leq C(1+t)^{\frac{2-d}{4}}.
	\end{aligned}
\end{equation}
Substituting \eqref{u}-\eqref{v2} into \eqref{c} yields
\begin{equation}
\label{4.6}
	\|c\|_{L^\infty(\mathbb{R}^d)} \leq Ce^{-\bar{u}(1+t)[1-C(1+t)^{-\frac{2+d}{4}}-C(1+t)^{-1-\frac{d}{2}}]}\leq Ce^{-\bar{u}t}.
\end{equation}
The conclusion follows from \eqref{4.1} and \eqref{4.6}.
\hfill$\Box$

\bigskip

\section*{Acknowledgement}
{\small Q. Tao was supported in part by the National Natural Science Foundation of China (12471423) and the Guangdong Basic and Applied
Basic Research Foundation (2025A1515010485). D. Wang was supported in part by NSF grants DMS-2219384 and DMS-2510532.
Y. Yang  was supported in part by the National Natural Science Foundation of China (12271186), the Shenzhen Science and Technology Program (JCYJ20241202124209011) and the Shenzhen
Natural Science Fund (the Stable Support Plan Program 20231122110251002).
}

\section*{Data Availability}
No data was used for the research described in the article.

\section*{Declarations}
Conflict of interest The authors declare that they have no Conflict of interest.

\bigskip

\end{document}